\theoremstyle{plain}% default
\newtheorem{theorem}{Theorem}[section]
\newtheorem{proposition}[theorem]{Proposition}
\newtheorem{corollary}[theorem]{Corollary}
\theoremstyle{definition}
\newtheorem{definition}{Definition}[section]
\newtheorem{notation}{Notation}[section]
\newtheorem{example}{Example}[section]
\theoremstyle{remark}
\newtheorem{remark}{Remark}[section]
\title{Extended derivations of algebras}
\def\blfootnote{\xdef\@thefnmark{}\@footnotetext}
\renewcommand\@date
\author{}
\begin{document}

\maketitle

\blfootnote{2010 \textit{Mathematics Subject Classification:} 16W25. \textit{Key words and phrases:} Lie algebras; Anti-commutative algebras; Generalized derivations of Algebras; Isomorphism problem; Invariants of algebras; Degenerations of Algebras }

\begin{abstract}
We study the concept of \textit{extended derivations of algebras} which expands diverse definitions of \textit{generalized derivations} given in the literature. We concentrate on the family of the anti-commutative algebras and classify such spaces of derivations by considering a suitable notion of equivalence when we restrict attention to such algebras. Afterwards, we investigate a link between the well-known $(\alpha,\beta,\gamma)$-derivations and some new extended derivations obtained in the above-mentioned classification. Finally, we present applications of extended derivations to study degenerations of algebras.
\end{abstract}

\section{Introduction}{$ $}

As far as we know, the notion of derivations of algebras was motivated by Nathan Jacobson in \cite{jacobson}; in analogy with the \textit{Leibniz product rule} of calculus (although at that time there were evidences of a link between derivations and automorphisms of algebras, see \cite[p.206-fn.\ddag ]{jacobson}).  About three decades later, Nicolas Bourbaki published a new edition of his treatise on Algebra for his \textit{{\'{E}l\'{e}ments de math\'{e}matique}} (formerly known as \textit{Les structures fondamentales de l'analyse}). In \cite[Chapitre III. \S10.]{Bourbaki1}, it is given the \textit{general definition of derivations} and properties of such notion are deeply studied throughout this section. Since then, derivations of algebras and their ``derived products'' have been extensively studied with the aim of providing structure results for algebras. For instance, we can mention the Jacobson-Moens theorem, which states that

\begin{quote}
\textbf{Theorem} \cite{jacobson2,moens} A Lie algebra over a field of characteristic zero is nilpotent if and only if it admits an invertible \textit{Leibniz-derivation}.
\end{quote}

Also, it is worth mentioning the works by Leger and Luks \cite{legerluks}, and by Hrivn\'{a}k and Novotn\'{y} \cite{NovotnyHrivnak}. In the first one, some relevant sufficient criteria are given for ensuring the equality between the sum of the algebra of derivations and the \textit{centroid} of a Lie algebra and its algebra of \textit{quasiderivations}, as well as conditions on a Lie algebra that force equality in the inclusion of its centroid into its \textit{quasicentroid}; among other remarkable results. In regards to the second paper, the authors present the concept of \textit{$(\alpha, \beta, \gamma)$-derivations} of Lie algebras. Such idea was introduced by them in their respective Ph.D. theses for the purpose of defining new invariants of Lie algebra, invariants that can be used to study \textit{degenerations} of Lie algebras (see \cite[Chapter 2]{hrivnak} and \cite[Section 4.7]{Novotny}). A very nice result concerning $(\alpha, \beta, \gamma)$-derivations can be interpreted as follows:

\begin{quote}
\textbf{Theorem} \cite[Theorem 3.4]{NovotnyHrivnak2} There are sufficient invariants defined by $(\alpha, \beta, \gamma)$-derivations to distinguish $3$-dimensional complex Lie algebras, up to isomorphism.
\end{quote}

The aim of this paper is to studied \textit{extended derivations} of algebras, which can be considered as a generalization of the different notions of derivations mentioned above. We have used extended derivations to classify algebras endowed with additional structures coming from geometry or algebra, up to an appropriate equivalence relation \cite{fernandezrojas1,fernandezrojas2,fernandezrojas3}. We will restrict ourselves to study extended derivations in the family of anti-commutative algebras (results also hold for commutative algebras) and we introduce a notion of equivalence of extended derivations. In \textbf{Theorem \ref{teoremaA}}, we classify extended derivations, up to the mentioned equivalence relation and recover \cite[Theorem 2.2]{NovotnyHrivnak}. Then we studied two types of extended derivations obtained in the classification, denoted by $\mathcal{T}(t)(\square)$ and $\mathcal{P}(\square)$; the first one depends on a parameter $t$ in the base field. We are interested in the possibility of expressing them in terms of $(\alpha, \beta, \gamma)$-derivations and we obtain some results about that.

The proofs in this paper are quite elementary and follow from basic linear algebra. In fact, one advantage of extended derivations is that they define invariants of algebras which can be easily computed by solving a homogeneous system of linear equations. Besides such invariants provide tools to study \textit{degenerations of algebras}; we present an illustrative example in Section \ref{applicaciones}.

\section{Extended derivations}

\begin{definition}
Let $\mathbb{K}$ be a field. An \textit{algebra over} $\mathbb{K}$ is simply a vector space over $\mathbb{K}$, say $V$, endowed with a bilinear map $\mu:V \times V \rightarrow V$ called \textit{product}. An algebra over $\mathbb{K}$ is sometimes also called a $\mathbb{K}$-algebra, and $\mathbb{K}$ is called the \textit{base field}.

A $\mathbb{K}$-algebra $\mathfrak{A}=(V,\mu)$ is said to be \textit{anti-commutative} (or \textit{skew-symmetric}) if it satisfies the identity
  $$
  \mu(X,Y) = - \mu(Y,X)  \quad \mbox{ for all } X,Y\in V.
  $$
\end{definition}

The \textit{Lie algebras} are widely considered to be the most remarkable example of anti-commutative algebras.

For simplicity, we assume throughout this paper that all $\mathbb{K}$-algebras are finite dimensional and that the base field $\mathbb{K}$ is of characteristic zero.

\begin{notation}
If $V$ and $W$ are finite-dimensional $\mathbb{K}$-vector spaces, let us denote the vector space of all multilinear maps from $V^{k}$, the $k$-fold Cartesian product of $V$, to $W$ by $L^{k}(V;W)$.
Let $C^{k}(V;W)$ denote the subspace consisting of all multilinear maps in $ L^{k}(V;W)$ that are \textit{alternating}; i.e. $\mu \in L^{k}(V;W)$ such that $\mu(v_{\sigma(1)}, v_{\sigma(2)}, \ldots , v_{\sigma(k)})$ is equal to $\operatorname{sign}(\sigma) \, \mu(v_1,v_2, \ldots, v_k)$ for each permutation $\sigma \in \operatorname{S}_{k}$.
\end{notation}

\begin{definition}
Let $\mathfrak{A}=(V,\mu)$ and $\mathfrak{B}=(W,\lambda)$ be two $\mathbb{K}$-algebras. An invertible linear transformation $T:V \rightarrow W$ is called an \textit{algebra isomorphism} if it satisfies
$$
T(\mu(X,Y)) = \lambda (T(X),T(Y))  \quad \mbox{ for all } X,Y\in V.
$$
The two algebras are said to be \textit{isomorphic} if there exists an algebra isomorphism between $\mathfrak{A}$ and $\mathfrak{B}$.
\end{definition}

Next we introduce the focal point of our study.

\begin{definition}\label{defextended}
Let $\mathfrak{A}=(V,\mu)$ be an $\mathbb{K}$-algebra and let $\Omega=\left(\begin{array}{ccc} a_{1} & a_{2}& a_{3} \\ a_{4} & a_{5} & a_{6} \\ \end{array} \right)$ be any fixed matrix in $\mathrm{M}(2\times 3 , \mathbb{K})$. An \textit{$\Omega-\mbox{derivation}$ of $\mathfrak{A}$} is a triple of linear transformations $(A,B,C) \in (L^{1}(V;V))^{3}$ such that
$$
  \left\{
  \begin{array}{l}
    a_1 A + a_2 B + a_3 C =0\\
    a_4 A + a_5 B + a_6 C =0\\
    A\mu(X,Y) = \mu(BX,Y) + \mu(X,CY) \quad \mbox{ for all } X,Y\in V.
  \end{array}
  \right.
$$
We denote by $\operatorname{Der}_{\Omega}(\mathfrak{A})$ the set of all $\Omega-\mbox{derivations}$ of $\mathfrak{A}$. We say that the vector space $\operatorname{Der}_{\Omega}(\mathfrak{A})$ is a space of \textit{extended derivations}.
\end{definition}

\begin{example}
Let $\alpha_{1}, \alpha_{2}$ and $\alpha_{3}$ three constants in $\mathbb{K}$ and consider the $2\times 3$ matrices $\Omega_{1}$, $\Omega_{2}$ and $\Omega_{3}$ in $\mathrm{M}(2\times 3 , \mathbb{K})$ given by
$$
\begin{array}{ccccc}
 \Omega_{1}=\left(\begin{array}{ccc} \alpha_{2} & -\alpha_{1} & 0 \\ \alpha_{3} & 0 & -\alpha_{1} \\ \end{array} \right),  &
    &
 \Omega_{2}=\left(\begin{array}{ccc} \alpha_{2} & -\alpha_{1} & 0 \\     0  & \alpha_{3} & -\alpha_{2} \\ \end{array} \right),   &
    &
 \Omega_{3}=\left(\begin{array}{ccc} \alpha_{3} & 0 & -\alpha_{1} \\ 0 & \alpha_{3} & -\alpha_{2} \\ \end{array} \right).
\end{array}
$$

Note that if $\alpha_{i}\neq 0$, then the set of all $(\alpha_1,\alpha_2,\alpha_3)-\mbox{derivations}$ of an algebra $\mathfrak{A}$,  $\mathcal{D}(\alpha_1,\alpha_2,\alpha_3)(\mathfrak{A})$ (see \cite[\S 2]{NovotnyHrivnak}), is isomorphic to $\operatorname{Der}_{\Omega_{i}}(\mathfrak{A})$. In particular, the algebra of derivations of $\mathfrak{A}$, $\operatorname{Der}(\mathfrak{A})$, is isomorphic to $\operatorname{Der}_{\Omega_{1}}(\mathfrak{A})$, with $\alpha_{1}=\alpha_{2}=\alpha_{3}$ and $\alpha_{1}\neq0$.
\end{example}

\begin{remark}
Two different matrices $\Omega_1$ and $\Omega_2$ in $\mathrm{M}(2\times 3 , \mathbb{K})$ could produce the same space derivations; in the sense of Definition \ref{defextended}. In fact, if $\Omega_1$ and $\Omega_2$ are \textit{row equivalent matrices}, then $\operatorname{Der}_{\Omega_{1}}(\mathfrak{A}) = \operatorname{Der}_{\Omega_{2}}(\mathfrak{A})$.
\end{remark}

If an algebra $\mathfrak{A}$ satisfies some \textit{symmetry condition} such as anti-commutativity (or commutativity), then there are more redundancies between the different spaces of extended derivations in the following sense:

\begin{definition}
Let $\Omega$ and $\widetilde{\Omega}$ be two matrices in $\mathrm{M}(2\times 3 , \mathbb{K})$. We say that the two spaces of extended derivations associated with them are \textit{equivalent with respect to the family of anti-commutative algebras} if for each anti-commutative algebra $\mathfrak{A}$ there exists a linear isomorphism $\varphi$ between $\operatorname{Der}_{\Omega}(\mathfrak{A})$ and $\operatorname{Der}_{\widetilde{\Omega}}(\mathfrak{A})$.
\end{definition}

\begin{example}
Let $\Omega$ be a matrix in $\mathrm{M}(2\times 3 , \mathbb{K})$, and let $\widetilde{\Omega}$ the matrix in $\mathrm{M}(2\times 3 , \mathbb{K})$ obtained
by interchanging columns 2 and 3. If $\mathfrak{A}=(V,\mu)$ is any anti-commutative algebra, then it is not hard to check that the linear transformation $\varphi: \operatorname{Der}_{\Omega}(\mathfrak{A}) \rightarrow \operatorname{Der}_{\widetilde{\Omega}}(\mathfrak{A}) $ given by $\varphi(A,B,C) = (A,C,B)$ is an isomorphism. Then $\operatorname{Der}_{\Omega}( \square )$ is equivalent to $\operatorname{Der}_{\widetilde{\Omega}}( \square )$ in such family of algebras.
\end{example}

Our purpose in this section is to classify, up to equivalence, all possible spaces of extended derivations of anti-commutative algebras. To do this, we use the following result which says that $\operatorname{Der}_{0_{2\times 3}}(\mathfrak{A})$ ($\mbox{\LARGE $\vartriangle$}(\mathfrak{A})$ in the notation used in \cite{legerluks}) can be decomposed as a direct product of simpler spaces of extended derivations.

Given an algebra $\mathfrak{A}=(V,\mu)$, let us define the space of \textit{nearly derivations} of $\mathfrak{A}$, denoted by $\operatorname{NDer}(\mathfrak{A})$, to be the set
$$
\operatorname{NDer}(\mathfrak{A}) := \{ (P,Q) \in L^{1}(V;V)\times L^{1}(V;V) :  P\mu(X,Y) = \mu(QX,Y) + \mu(X,QY) \mbox{ for all } X,Y \in V \}.
$$
If $(P,Q)$ is a nearly derivation of $\mathfrak{A}$ then $Q$ is a \textit{quasiderivation} of $\mathfrak{A}$, in the sense of \cite[\S 1]{legerluks}.  The vector space $\operatorname{NDer}(\mathfrak{A})$ is isomorphic to
$\operatorname{Der}_{{\Omega}}(\mathfrak{A})$ with $\Omega=\left(\begin{array}{ccc} 0 & 1 & -1  \\ 0 & 0 & 0 \\ \end{array} \right)$. Note that $\operatorname{NDer}(\mathfrak{A})$ is the Lie algebra of the isotropy group of $\mu$ for the action of $\operatorname{GL}(V)\times \operatorname{GL}(V)$ on $L^{2}(V;V)$ given by $((g,h)\bullet \lambda)(X,Y):=g \lambda(h^{-1}X,h^{-1}Y)$.

The \textit{quasicentroid of $\mathfrak{A}$} (in the sense of \cite[\S 1]{legerluks}), denoted by $\operatorname{QC}(\mathfrak{A})$, is the set
$$
\operatorname{QC}(\mathfrak{A}):= \{ R \in L^{1}(V;V) : \mu(R X, Y) - \mu(X, R Y) =0 \mbox{ for all } X,Y \in V  \}.
$$
We have $\operatorname{QC}(\mathfrak{A})$  is equal to $\mathcal{D}(0,1,-1)(\mathfrak{A})$ and isomorphic to $\operatorname{Der}_{{\Omega}}(\mathfrak{A})$ with $\Omega=\left(\begin{array}{ccc} 0 & 1 & 1  \\ 1 & 0 & 0 \\ \end{array} \right)$. In \cite[Pag. 211]{NovotnyHrivnak} it was noted that $\mathcal{D}(0,1,-1)(\mathfrak{A})$ endowed with the usual \textit{Jordan product} on $L^{1}(V;V)$ is a \textit{Jordan algebra}.

\begin{proposition}{\cite[Proof of Proposition 3.3]{legerluks}}\label{descomposicion}
Let  $\mathfrak{A}=(V,\mu)$ be an anti-commutative (or commutative) algebra. Then $\operatorname{Der}_{0_{2\times 3}}(\mathfrak{A})$ is isomorphic to $\operatorname{NDer}(\mathfrak{A}) \times \operatorname{QC}(\mathfrak{A})$
\end{proposition}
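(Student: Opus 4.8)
The plan is to produce an explicit linear isomorphism by the change of variables on the pair $(B,C)$ that separates the ``symmetric'' and the ``anti-symmetric'' contributions of the second slot of an $\Omega$-derivation. Given $(A,B,C)\in\operatorname{Der}_{0_{2\times 3}}(\mathfrak{A})$, I would set $Q:=\frac{1}{2}(B+C)$ and $R:=\frac{1}{2}(B-C)$, so that $B=Q+R$ and $C=Q-R$ (this is where $\operatorname{char}\mathbb{K}=0$ is used), and consider the map
$$
\varphi:\operatorname{Der}_{0_{2\times 3}}(\mathfrak{A})\longrightarrow \operatorname{NDer}(\mathfrak{A})\times\operatorname{QC}(\mathfrak{A}),\qquad \varphi(A,B,C):=\big((A,Q),R\big).
$$
Its linearity and injectivity are immediate (if $A=0$ and $B\pm C=0$ then $A=B=C=0$); the substance of the proof is to check that $\varphi$ indeed lands in $\operatorname{NDer}(\mathfrak{A})\times\operatorname{QC}(\mathfrak{A})$ and that it is onto.

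For the first point I would rewrite the structure identity $A\mu(X,Y)=\mu(BX,Y)+\mu(X,CY)$ in the new variables as
$$
A\mu(X,Y)=\big[\mu(QX,Y)+\mu(X,QY)\big]+\big[\mu(RX,Y)-\mu(X,RY)\big]\qquad\mbox{for all }X,Y\in V,
$$
and then introduce the bilinear maps $\Phi,\Psi:V\times V\to V$ given by $\Phi(X,Y):=A\mu(X,Y)-\mu(QX,Y)-\mu(X,QY)$ and $\Psi(X,Y):=\mu(RX,Y)-\mu(X,RY)$, so that the displayed identity reads simply $\Phi=\Psi$. The crux is a parity computation: using anti-commutativity of $\mu$ one verifies that $\Phi$ is anti-symmetric whereas $\Psi$ is symmetric (in the commutative case the two parities are swapped). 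Since $\Phi=\Psi$ and $\operatorname{char}\mathbb{K}\neq2$, both maps vanish identically; that is, $A\mu(X,Y)=\mu(QX,Y)+\mu(X,QY)$ and $\mu(RX,Y)-\mu(X,RY)=0$, which says precisely that $(A,Q)\in\operatorname{NDer}(\mathfrak{A})$ and $R\in\operatorname{QC}(\mathfrak{A})$.

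Surjectivity follows by reversing the construction: starting from $(P,Q)\in\operatorname{NDer}(\mathfrak{A})$ and $R\in\operatorname{QC}(\mathfrak{A})$, I would set $A:=P$, $B:=Q+R$, $C:=Q-R$; adding the defining identity of $\operatorname{NDer}(\mathfrak{A})$ to the (trivial) defining identity of $\operatorname{QC}(\mathfrak{A})$ yields $A\mu(X,Y)=\mu(BX,Y)+\mu(X,CY)$, so $(A,B,C)\in\operatorname{Der}_{0_{2\times 3}}(\mathfrak{A})$ and $\varphi(A,B,C)=((P,Q),R)$. Hence $\varphi$ is a linear isomorphism and the proposition follows. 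I expect the only delicate point to be the parity computation identifying $\Phi$ and $\Psi$ with the anti-symmetric and symmetric parts of the structure equation; this is exactly where the symmetry hypothesis on $\mathfrak{A}$, together with $\operatorname{char}\mathbb{K}\neq2$, is essential, and it also explains why the splitting cannot be expected for general (non-symmetric) algebras.
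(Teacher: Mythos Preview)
Your proposal is correct and follows essentially the same approach as the paper: both construct the isomorphism $\varphi(A,B,C)=\big((A,\tfrac{1}{2}(B+C)),\tfrac{1}{2}(B-C)\big)$ with inverse $((P,Q),R)\mapsto(P,Q+R,Q-R)$, and both verify well-definedness by exploiting the (anti-)commutativity of $\mu$. The only cosmetic difference is that the paper swaps $X,Y$ in the defining identity, applies anti-commutativity, and then averages and subtracts the two equations directly, whereas you first substitute $B=Q+R$, $C=Q-R$ and phrase the same computation as ``$\Phi=\Psi$ with $\Phi$ anti-symmetric and $\Psi$ symmetric, hence both vanish''; these are the same calculation in different packaging.
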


\begin{proof}
  Let $(A,B,C) \in \operatorname{Der}_{0_{2\times 3}}(\mathfrak{A})$ and let $X, Y \in V$ be arbitrary. By definition of $\operatorname{Der}_{0_{2\times 3}}(\mathfrak{A})$:
\begin{eqnarray}
\label{simetria0}  A\mu(X,Y) &=& \mu(BX,Y) + \mu(X,CY)\\
\label{simetria}   A\mu(Y,X) &=& \mu(BY,X) + \mu(Y,CX).
\end{eqnarray}
It follows from anti-commutativity (or commutativity) of $\mathfrak{A}$ and Equation (\ref{simetria}) that
\begin{eqnarray}
\label{simetria2}  A\mu(X,Y) &=& \mu(C X,Y) + \mu(X,BY).
\end{eqnarray}
By averaging Equations (\ref{simetria0}) and (\ref{simetria2}), and  since $\mu$ is a bilinear map we get
\begin{eqnarray*}
   A\mu(X,Y) & = & \mu \left( \frac{1}{2}(B+C) X , Y \right) +  \mu \left( X , \frac{1}{2}(B+C) Y \right).
\end{eqnarray*}
Now, subtracting Equation (\ref{simetria2}) from Equation (\ref{simetria0}), and by bilinearity again, we obtain
\begin{eqnarray*}
  0 & = & \mu \left( \frac{1}{2}(B - C) X , Y \right) -  \mu \left( X , \frac{1}{2}(B - C) Y \right).
\end{eqnarray*}
The last two equations motivates the following isomorphism $\varphi : \operatorname{Der}_{0_{2\times 3}}(\mathfrak{A}) \rightarrow \operatorname{NDer}(\mathfrak{A}) \times \operatorname{QC}(\mathfrak{A})$:
\begin{eqnarray*}
\varphi(A,B,C) & = & \left( \left( A, \frac{1}{2}(B+C)\right),  \frac{1}{2}(B-C) \right),
\end{eqnarray*}
whose inverse is the linear transformation $\varphi \operatorname{NDer}(\mathfrak{A}) \times \operatorname{QC}(\mathfrak{A}) \rightarrow \operatorname{Der}_{0_{2\times 3}}(\mathfrak{A})$ defined by the formula
\begin{eqnarray*}
\varphi^{-1}( (P,Q) , R) & = & \left( P, Q+R , Q-R \right).
\end{eqnarray*}
\end{proof}

\begin{notation}
Let  $\mathfrak{A}=(V,\mu)$ be an anti-commutative (or commutative) algebra and let $\Omega=\left(\begin{array}{ccc} b_{1} & b_{2}& b_{3} \\ b_{4} & b_{5} & b_{6} \\ \end{array} \right)$ in $\mathrm{M}(2\times 3 , \mathbb{K})$.
We denote by $(\operatorname{NDer}(\mathfrak{A}) \times \operatorname{QC}(\mathfrak{A}))\Omega$ the vector space
$$
\left\{ ( (P,Q) , R) \in \operatorname{NDer}(\mathfrak{A}) \times \operatorname{QC}(\mathfrak{A}) :
\begin{array}{l}
b_{1} P + b_{2} Q + b_{3}R = 0 \\
b_{4} P + b_{5} Q + b_{6}R = 0 \\
\end{array}
\right\}
$$
\end{notation}

\begin{corollary}\label{isoextended}
Let  $\mathfrak{A}=(V,\mu)$ be an anti-commutative (or commutative) algebra and let $\Omega=\left(\begin{array}{ccc} a_{1} & a_{2}& a_{3} \\ a_{4} & a_{5} & a_{6} \\ \end{array} \right)$
and  $\mho=\left(\begin{array}{ccc} b_{1} & b_{2}& b_{3} \\ b_{4} & b_{5} & b_{6} \\ \end{array} \right)$ in $\mathrm{M}(2\times 3 , \mathbb{K})$. Then $\operatorname{Der}_{ \Omega }(\mathfrak{A})$ is isomorphic to $(\operatorname{NDer}(\mathfrak{A}) \times \operatorname{QC}(\mathfrak{A}))\widetilde{\Omega}$ where $\widetilde{\Omega}$ is given by
$$
\left(\begin{array}{ccc} a_{1} &a_2+a_3& a_2-a_3  \\ a_{4} &  a_5+a_6  &  a_5-a_6  \\ \end{array} \right)
$$
and $(\operatorname{NDer}(\mathfrak{A}) \times \operatorname{QC}(\mathfrak{A})) \mho $ is isomorphic to $\operatorname{Der}_{ \widetilde{\mho }}(\mathfrak{A})$ with $\widetilde{\mho }$ given by
$$
\left(\begin{array}{ccc} b_{1} &(b_2+b_3)/2& (b_2-b_3)/2  \\ b_{4} &  (b_5+b_6)/2  &  (b_5-b_6)/2  \\ \end{array} \right)
$$
\end{corollary}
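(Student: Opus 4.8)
The plan is to deduce both statements directly from Proposition~\ref{descomposicion}, by restricting its isomorphism $\varphi$ (respectively its inverse $\varphi^{-1}$) to the subspaces cut out by the two extra linear equations and tracking how those equations transform under the associated change of variables. No new idea is required; the computation is purely mechanical.

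For the first assertion, recall from Definition~\ref{defextended} that $\operatorname{Der}_{\Omega}(\mathfrak{A})$ is exactly the subspace of $\operatorname{Der}_{0_{2\times 3}}(\mathfrak{A})$ consisting of the triples $(A,B,C)$ that additionally satisfy $a_1 A + a_2 B + a_3 C = 0$ and $a_4 A + a_5 B + a_6 C = 0$. Under the isomorphism $\varphi$ of Proposition~\ref{descomposicion}, the triple $(A,B,C)$ corresponds to $((P,Q),R)$ with $P = A$, $Q = \frac{1}{2}(B+C)$, $R = \frac{1}{2}(B-C)$, equivalently $A = P$, $B = Q + R$, $C = Q - R$. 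Substituting the latter expressions into the two constraints above turns them into $a_1 P + (a_2 + a_3) Q + (a_2 - a_3) R = 0$ and $a_4 P + (a_5 + a_6) Q + (a_5 - a_6) R = 0$, which is precisely the system defining $(\operatorname{NDer}(\mathfrak{A}) \times \operatorname{QC}(\mathfrak{A}))\widetilde{\Omega}$ for the $\widetilde{\Omega}$ in the statement. Hence $\varphi$ restricts to a linear isomorphism of $\operatorname{Der}_{\Omega}(\mathfrak{A})$ onto $(\operatorname{NDer}(\mathfrak{A}) \times \operatorname{QC}(\mathfrak{A}))\widetilde{\Omega}$.

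For the second assertion I would run the same argument through $\varphi^{-1}((P,Q),R) = (P, Q+R, Q-R)$. The space $(\operatorname{NDer}(\mathfrak{A}) \times \operatorname{QC}(\mathfrak{A}))\mho$ is cut out inside $\operatorname{NDer}(\mathfrak{A}) \times \operatorname{QC}(\mathfrak{A})$ by $b_1 P + b_2 Q + b_3 R = 0$ and $b_4 P + b_5 Q + b_6 R = 0$; writing $P = A$, $Q = \frac{1}{2}(B+C)$, $R = \frac{1}{2}(B-C)$ converts these into $b_1 A + \frac{b_2 + b_3}{2} B + \frac{b_2 - b_3}{2} C = 0$ and $b_4 A + \frac{b_5 + b_6}{2} B + \frac{b_5 - b_6}{2} C = 0$, that is, the defining equations of $\operatorname{Der}_{\widetilde{\mho}}(\mathfrak{A})$. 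So $\varphi^{-1}$ restricts to the desired isomorphism.

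I do not expect a genuine obstacle here; the only point worth a sanity check is the mutual consistency of the two column operations $\Omega \mapsto \widetilde{\Omega}$ and $\mho \mapsto \widetilde{\mho}$: the first replaces the pair $(a_2,a_3)$ in each row by $(a_2+a_3,\,a_2-a_3)$, the second replaces $(b_2,b_3)$ by $(\frac{b_2+b_3}{2},\,\frac{b_2-b_3}{2})$, and these are inverse to one another, as they must be since they arise from $\varphi$ and $\varphi^{-1}$. One may also note that, by the same reasoning as in the remark on row equivalence preceding Proposition~\ref{descomposicion}, all the spaces involved depend only on the row-equivalence class of the relevant matrix, so the statement is unambiguous even though $\widetilde{\Omega}$ and $\widetilde{\mho}$ are displayed as particular representatives.
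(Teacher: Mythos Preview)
Your proposal is correct and follows essentially the same approach as the paper: the paper's own proof simply states that restricting the isomorphism $\varphi$ (respectively $\varphi^{-1}$) from Proposition~\ref{descomposicion} to the relevant subspace yields the desired isomorphism, without spelling out the substitution. Your version supplies those details explicitly, which is a welcome clarification, but the underlying argument is identical.
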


\begin{proof}
Let $\varphi : \operatorname{Der}_{0_{2\times 3}}(\mathfrak{A}) \rightarrow \operatorname{NDer}(\mathfrak{A}) \times \operatorname{QC}(\mathfrak{A})$ be the isomorphism used in the preceding proof.
The restriction of $\varphi$ to the vector subspace $\operatorname{Der}_{ \Omega }(\mathfrak{A})$ yields an isomorphism between $\operatorname{Der}_{ \Omega }(\mathfrak{A})$ and $(\operatorname{NDer}(\mathfrak{A}) \times \operatorname{QC}(\mathfrak{A}))\widetilde{\Omega}$.

Similarly, the restriction of the isomorphism $\varphi^{-1}$ to $(\operatorname{NDer}(\mathfrak{A}) \times \operatorname{QC}(\mathfrak{A})) \mho $ gives us an isomorphism to $\operatorname{Der}_{ \widetilde{\mho }}(\mathfrak{A})$.
\end{proof}

The advantage of using the spaces $(\operatorname{NDer}(\mathfrak{A}) \times \operatorname{QC}(\mathfrak{A})) \mho $ instead of the spaces $\operatorname{Der}_{ \Omega }(\mathfrak{A})$ lies in the fact that although the quasicentroid component of a triple $((P,Q),R)$ in $(\operatorname{NDer}(\mathfrak{A}) \times \operatorname{QC}(\mathfrak{A}))\mho $ could depend linearly on the other components, the \textit{quasicentroid equation} depends only of $R$.

\begin{theorem}\label{teoremaA}
  Let  $\mathfrak{A}=(V,\mu)$ be an anti-commutative (or commutative) algebra and let $\Omega \in \mathrm{M}(2\times 3 , \mathbb{K})$. Then $\operatorname{Der}_{ \Omega }(\mathfrak{A})$ isomorphic to one of the following spaces of extended derivations:
  \begin{itemize}
  \begin{multicols}{2}
    \item $\mathcal{D}(1,0,0)(\mathfrak{A})$
    \item $\mathcal{D}(0,1,-1)(\mathfrak{A}) = \operatorname{QC}(\mathfrak{A})$
    \item $\mathcal{D}(1,1,-1)(\mathfrak{A})$
    \item $\mathcal{D}(t,1,0)(\mathfrak{A})$, $t\in \mathbb{K}$
    \item $\mathcal{D}(t,1,1)(\mathfrak{A})$, $t\in \mathbb{K}$
    \item $\mathcal{D}(t,1,1)(\mathfrak{A}) \times \mathcal{D}(0,1,-1)(\mathfrak{A})$, $t\in \mathbb{K}$
    \item $\mathcal{D}(1,0,0)(\mathfrak{A}) \times \mathcal{D}(0,1,-1)(\mathfrak{A})$
  \end{multicols}
  \vspace{-0.5cm}
    \item $\mathcal{T}(t)(\mathfrak{A}) := \left\{ (A,B,C) \in \operatorname{Der}_{0_{2\times 3}}(\mathfrak{A}) : A + t B + (t-1)C=0 \right\}$, $t\in \mathbb{K}$
    \item $\mathcal{P}(\mathfrak{A}):= \left\{ (A,B) \in L^{1}(V;V)\times L^{1}(V;V) :  A\mu(X,Y) = \mu(BX,Y) \mbox{ for all } X,Y \in V \right\}$.
    \item $\operatorname{NDer}(\mathfrak{A})$
    \item $\operatorname{NDer}(\mathfrak{A}) \times \mathcal{D}(0,1,-1)(\mathfrak{A})$
  \end{itemize}
\end{theorem}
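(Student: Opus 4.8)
The plan is to push everything through Corollary~\ref{isoextended}: since $\Omega\mapsto\widetilde{\Omega}$ is a bijection of $\mathrm{M}(2\times 3,\mathbb{K})$, it suffices to determine, for every $\mho\in\mathrm{M}(2\times 3,\mathbb{K})$, to which space in the list $(\operatorname{NDer}(\mathfrak{A})\times\operatorname{QC}(\mathfrak{A}))\mho$ is isomorphic. Before the case analysis I would record the moves on $\mho$ that leave the isomorphism type unchanged, uniformly in $\mathfrak{A}$: (i) replacing $\mho$ by a row-equivalent matrix (the two constraints then cut out the same subspace); (ii) rescaling columns $1$ and $2$ of $\mho$ by a common nonzero scalar, because $(P,Q)\mapsto(cP,cQ)$ preserves $\operatorname{NDer}(\mathfrak{A})$; and (iii) rescaling column $3$ by a nonzero scalar, because $R\mapsto cR$ preserves $\operatorname{QC}(\mathfrak{A})$. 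Interchanging columns $2$ and $3$ of $\Omega$, as in the Example, amounts under the isomorphism $\varphi$ of Proposition~\ref{descomposicion} to negating column $3$ of $\widetilde{\Omega}$, so it is a special case of (iii).

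I would then argue by $\operatorname{rank}\mho$. If $\operatorname{rank}\mho=0$ then $\mho=0$ and Proposition~\ref{descomposicion} gives $\operatorname{NDer}(\mathfrak{A})\times\operatorname{QC}(\mathfrak{A})=\operatorname{NDer}(\mathfrak{A})\times\mathcal{D}(0,1,-1)(\mathfrak{A})$. If $\operatorname{rank}\mho=1$, moves (i)--(iii) bring the single nonzero row of $\mho$ into one of the forms $(0,0,1),\ (0,1,0),\ (0,1,1),\ (1,b,0),\ (1,b,1)$ with $b\in\mathbb{K}$; solving the one constraint for the variable with coefficient $1$ and using that $(P,0)\in\operatorname{NDer}(\mathfrak{A})$ just says $P\mu=0$ (equivalently $P\in\mathcal{D}(1,0,0)(\mathfrak{A})$), one gets in turn $\operatorname{NDer}(\mathfrak{A})$; $\mathcal{D}(1,0,0)(\mathfrak{A})\times\mathcal{D}(0,1,-1)(\mathfrak{A})$; a space isomorphic to $\mathcal{P}(\mathfrak{A})$; $\mathcal{D}(-b,1,1)(\mathfrak{A})\times\mathcal{D}(0,1,-1)(\mathfrak{A})$; and a space isomorphic to $\mathcal{T}(t)(\mathfrak{A})$ with $t=(b+1)/2$. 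If $\operatorname{rank}\mho=2$, row reduction puts the pivots in columns $\{2,3\}$, $\{1,3\}$ or $\{1,2\}$: the first forces $Q=R=0$ and yields $\mathcal{D}(1,0,0)(\mathfrak{A})$; the second forces $R=0$, $P=-bQ$ and yields $\{Q:(-bQ,Q)\in\operatorname{NDer}(\mathfrak{A})\}=\mathcal{D}(-b,1,1)(\mathfrak{A})$; in the third $P=-cR$, $Q=-dR$ with $R\in\operatorname{QC}(\mathfrak{A})$, and after normalising $(c,d)$ by (ii)--(iii) to $(0,0)$, $(1,0)$, $(0,1)$ or $(1,\lambda)$ with $\lambda\neq 0$ the space is, respectively, $\operatorname{QC}(\mathfrak{A})$; $\mathcal{D}(1,0,0)(\mathfrak{A})\cap\operatorname{QC}(\mathfrak{A})$; $\{R:\mu(RX,Y)=0\text{ for all }X,Y\}$; and $\{R\in\operatorname{QC}(\mathfrak{A}):R\mu(X,Y)=2\lambda\,\mu(RX,Y)\text{ for all }X,Y\}$.

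The last step is to recognise these spaces, and this is where anti-commutativity enters. Swapping $X$ and $Y$ in the relation defining $\mathcal{D}(\alpha,\beta,\gamma)(\mathfrak{A})$ shows that any element of it also satisfies the $(\alpha,\gamma,\beta)$-relation; from this I would extract the identities I need: $\{R:\mu(RX,Y)=0\}=\mathcal{D}(0,1,0)(\mathfrak{A})$ (the $t=0$ instance of $\mathcal{D}(t,1,0)$); $\mathcal{D}(1,0,0)(\mathfrak{A})\cap\operatorname{QC}(\mathfrak{A})=\mathcal{D}(1,1,-1)(\mathfrak{A})$; and $\mathcal{D}(t,1,0)(\mathfrak{A})\subseteq\operatorname{QC}(\mathfrak{A})$ for all $t$. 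The last identity turns $\{R\in\operatorname{QC}(\mathfrak{A}):R\mu(X,Y)=2\lambda\,\mu(RX,Y)\}$ into $\mathcal{D}(\frac{1}{2\lambda},1,0)(\mathfrak{A})$ and, together with the companion fact that $A\mu(X,Y)=\mu(BX,Y)$ forces $B\in\operatorname{QC}(\mathfrak{A})$ when $\mathfrak{A}$ is anti-commutative, shows that the $(0,1,1)$-space of the rank-$1$ case is $\mathcal{P}(\mathfrak{A})$ via $(P,Q)\mapsto(P,2Q)$. For $\mathcal{T}(t)(\mathfrak{A})$ I would compute $\varphi(\mathcal{T}(t)(\mathfrak{A}))$ outright and observe it is the subspace of $\operatorname{NDer}(\mathfrak{A})\times\operatorname{QC}(\mathfrak{A})$ defined by $P+(2t-1)Q+R=0$. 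Collecting all cases reproduces the eleven items in the statement; as a byproduct one recovers \cite[Theorem 2.2]{NovotnyHrivnak}.

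There is no hard idea here: the real work is the bookkeeping --- checking that (i)--(iii) generate enough symmetry to collapse the naive two-parameter families of rank-$1$ and rank-$2$ matrices to the stated one-parameter families, verifying that the list of normal forms is complete, and confirming the handful of anti-commutativity identities. The point most easily botched is the final rank-$2$ subcase, where the seemingly new family $\{R\in\operatorname{QC}(\mathfrak{A}):R\mu(X,Y)=2\lambda\,\mu(RX,Y)\}$ has to be identified with $\mathcal{D}(t,1,0)(\mathfrak{A})$; overlooking that $\mathcal{D}(t,1,0)(\mathfrak{A})$ is automatically contained in the quasicentroid would make the list look incomplete.
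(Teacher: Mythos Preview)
Your proposal is correct and follows essentially the same route as the paper: both pass through Corollary~\ref{isoextended} to work with $(\operatorname{NDer}(\mathfrak{A})\times\operatorname{QC}(\mathfrak{A}))\mho$, reduce $\mho$ using row equivalence and column rescalings, and then identify the resulting subspace case by case. The only difference is presentational: you organise by $\operatorname{rank}\mho$ and list the moves (i)--(iii) upfront, whereas the paper enumerates the seven reduced row echelon forms directly and introduces the rescaling isomorphisms $\psi((P,Q),R)=((P,Q),bR)$ ad hoc inside each subcase; the subcases and identifications match one-for-one.
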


\begin{proof}
If $\Omega_1$ and $\Omega_2$ are row equivalent matrices, then $(\operatorname{NDer}(\mathfrak{A}) \times \operatorname{QC}(\mathfrak{A})) \Omega_1$ is equal to $(\operatorname{NDer}(\mathfrak{A}) \times \operatorname{QC}(\mathfrak{A})) \Omega_2$. Since there are only seven types of \textit{reduced row echelon forms} in $\mathrm{M}(2\times 3 , \mathbb{K})$, we have have to study $(\operatorname{NDer}(\mathfrak{A}) \times \operatorname{QC}(\mathfrak{A})) \Omega$ in each of the following cases:

\begin{itemize}
  \item Case I: $\Omega = \left(\begin{array}{ccc} 1 &0& a \\ 0 &  1  &  b  \\ \end{array} \right) $
  \begin{itemize}
    \item Subcase 1: $a=0$ and $b=0$. In this situation, we have $P=0$ and $Q=0$. Therefore $(\operatorname{NDer}(\mathfrak{A}) \times \operatorname{QC}(\mathfrak{A})) \Omega$ is naturally isomorphic to $\operatorname{QC}(\mathfrak{A})$.
    \item Subcase 2: $a\neq 0$ and $b=0$. Consider $\widetilde{\Omega} \in \mathrm{M}(2\times 3 , \mathbb{K})$ given by  $\left(\begin{array}{ccc} 1 &0& 1 \\ 0 &  1  & 0  \\ \end{array} \right)$ and let $\psi: (\operatorname{NDer}(\mathfrak{A}) \times \operatorname{QC}(\mathfrak{A})) \Omega \rightarrow (\operatorname{NDer}(\mathfrak{A}) \times \operatorname{QC}(\mathfrak{A})) \widetilde{\Omega}$ the linear map defined by $\psi ((P,Q),R) = ((P,Q),aR)$. Clearly $\psi$ is an isomorphism, and from Corollary \ref{isoextended} we have $(\operatorname{NDer}(\mathfrak{A}) \times \operatorname{QC}(\mathfrak{A})) \widetilde{\Omega}$ is isomorphic to $\operatorname{Der}_{\mho}(\mathfrak{A})$ with $\mho = \left(\begin{array}{ccc} 1 & 1/2 & -1/2 \\ 0 &  1/2  & 1/2  \\ \end{array} \right)$. Since $\mho$ is row equivalent to $\widetilde{\mho } =\left(\begin{array}{ccc} 1 & 0 & -1 \\ 0 &  1  & 1  \\ \end{array} \right)$ and $\operatorname{Der}_{\widetilde{\mho}}(\mathfrak{A})$ is isomorphic to $\mathcal{D}(1,1,-1)(\mathfrak{A})$, this implies that $(\operatorname{NDer}(\mathfrak{A}) \times \operatorname{QC}(\mathfrak{A})) \Omega$ is isomorphic to the last vector space.

        It is important to observe that $\mathcal{D}(1,1,-1)(\mathfrak{A}) = \operatorname{QC}(\mathfrak{A}) \cap \mathcal{D}(1,0,0)(\mathfrak{A})$.

    \item Subcase 3: $b\neq 0$. In this case, we have $(\operatorname{NDer}(\mathfrak{A}) \times \operatorname{QC}(\mathfrak{A})) \Omega$ is isomorphic to the vector space $\mathcal{D}(a/(2b),1,0)(\mathfrak{A})$. In fact, let $\widetilde{\Omega}$ be the matrix $\left(\begin{array}{ccc} 1 & 0 & a/b \\ 0 &  1  & 1  \\ \end{array} \right) $ and define the linear map $\psi: (\operatorname{NDer}(\mathfrak{A}) \times \operatorname{QC}(\mathfrak{A})) \Omega \rightarrow (\operatorname{NDer}(\mathfrak{A}) \times \operatorname{QC}(\mathfrak{A})) \widetilde{\Omega}$ by $\psi((P,Q),R) = ((P,Q),bR)$. It is easy to check that $\psi$ is an isomorphism, and since $(\operatorname{NDer}(\mathfrak{A}) \times \operatorname{QC}(\mathfrak{A})) \widetilde{\Omega}$ is isomorphic to $\mathcal{D}(a/(2b),0,1)(\mathfrak{A}) = \mathcal{D}(a/(2b),1,0)(\mathfrak{A})$ (by Corollary \ref{isoextended} again), the assertion follows.
  \end{itemize}
 \item  Case II: $\Omega = \left(\begin{array}{ccc} 1 & a & 0 \\ 0 &  0  &  1  \\ \end{array} \right) $. It is easily seen that $(\operatorname{NDer}(\mathfrak{A}) \times \operatorname{QC}(\mathfrak{A})) \Omega$ is clarly isomorphic to $\mathcal{D}(-a,1,1)(\mathfrak{A})$.

 \item Case III: $\Omega = \left(\begin{array}{ccc} 1 & a & b \\ 0 &  0  &  0  \\ \end{array} \right) $
  \begin{itemize}
    \item Subcase 1: $b=0$. In this case it is straightforward to see that $(\operatorname{NDer}(\mathfrak{A}) \times \operatorname{QC}(\mathfrak{A})) \Omega$ is isomorphic to $\mathcal{D}(-a,1,1)(\mathfrak{A}) \times \mathcal{D}(0,1,-1)(\mathfrak{A})$.
    \item Subcase 2: $b\neq 0$. Let $\widetilde{\Omega}$ be the matrix $\left(\begin{array}{ccc} 1 & a & 1 \\ 0 &  0  &  0  \\ \end{array} \right) $, and let $\psi: (\operatorname{NDer}(\mathfrak{A}) \times \operatorname{QC}(\mathfrak{A})) \Omega \rightarrow (\operatorname{NDer}(\mathfrak{A}) \times \operatorname{QC}(\mathfrak{A})) \widetilde{\Omega}$ be defined by $\psi ((P,Q),R) = ((P,Q),bR)$. We have $\psi$ is a isomorphism, and it then follows from Corollary \ref{isoextended} that $(\operatorname{NDer}(\mathfrak{A}) \times \operatorname{QC}(\mathfrak{A})) \Omega$ is isomorphic to
        $$
        \left\{ (A,B,C) \in \operatorname{Der}_{0_{2\times 3}}(\mathfrak{A}) : A + t B + (t-1)C=0 \right\},
        $$
        with $t = (a+1)/2$.
  \end{itemize}
  \item Case IV: $\Omega = \left(\begin{array}{ccc} 0 & 1 & 0 \\ 0 &  0  &  1  \\ \end{array} \right) $. It is immediate that $(\operatorname{NDer}(\mathfrak{A}) \times \operatorname{QC}(\mathfrak{A})) \Omega$ is isomorphic to $\mathcal{D}(1,0,0)(\mathfrak{A})$
  \item Case V:  $\Omega = \left(\begin{array}{ccc} 0 & 1 & a \\ 0 &  0  &  0  \\ \end{array} \right) $.
  \begin{itemize}
    \item Subcase 1: $a=0$. In this case we have $(\operatorname{NDer}(\mathfrak{A}) \times \operatorname{QC}(\mathfrak{A})) \Omega$ is isomorphic to $\mathcal{D}(1,0,0)(\mathfrak{A}) \times \mathcal{D}(0,1,-1)(\mathfrak{A}) $.
    \item Subcase 2: $a\neq0$. If $\widetilde{\Omega} = \left(\begin{array}{ccc} 0 & 1 & 1 \\ 0 &  0  &  0  \\ \end{array} \right)$, we have $\psi: (\operatorname{NDer}(\mathfrak{A}) \times \operatorname{QC}(\mathfrak{A})) \Omega \rightarrow (\operatorname{NDer}(\mathfrak{A}) \times \operatorname{QC}(\mathfrak{A})) \widetilde{\Omega}$ given by $\psi((P,Q),R)=((P,Q),a R)$ is an isomorphism, and since $(\operatorname{NDer}(\mathfrak{A}) \times \operatorname{QC}(\mathfrak{A})) \widetilde{\Omega}$ is isomorphic to $\operatorname{Der}_{ \mho }(\mathfrak{A})$ with  and $\mho = \left(\begin{array}{ccc} 0 & 1 & 0 \\ 0 &  0  &  0  \\ \end{array} \right)$, it follows that $(\operatorname{NDer}(\mathfrak{A}) \times \operatorname{QC}(\mathfrak{A})) \Omega$ is isomorphic to
        $$
        \left\{ (A,B) \in L^{1}(V;V)\times L^{1}(V;V) :  A\mu(X,Y) = \mu(BX,Y) \mbox{ for all } X,Y \in V \right\}.
        $$
  \end{itemize}
  \item Case VI: $\Omega = \left(\begin{array}{ccc} 0 & 0 & 1 \\ 0 &  0  &  0  \\ \end{array} \right) $. In this case $(\operatorname{NDer}(\mathfrak{A}) \times \operatorname{QC}(\mathfrak{A})) \Omega$ is isomorphic to $\operatorname{NDer}(\mathfrak{A})$.
  \item Case VII: $\Omega = 0_{2\times 3}$. In this last case $(\operatorname{NDer}(\mathfrak{A}) \times \operatorname{QC}(\mathfrak{A}))0_{2\times 3} \cong \operatorname{Der}_{0_{2\times 3}}(\mathfrak{A}) \cong \operatorname{NDer}(\mathfrak{A}) \times \operatorname{QC}(\mathfrak{A})$.
\end{itemize}

\end{proof}

In the following diagram, we summarize some canonical embeddings between the spaces of extended derivations obtained in Theorem \ref{teoremaA}:\newline$ $

\begin{tikzpicture}

\node[circle,fill,inner sep=0pt,minimum size=3pt,label=above:{$\operatorname{NDer}(\mathfrak{A}) \times \operatorname{QC}(\mathfrak{A})$}] (nxq) at (0,4) {};

\node[](qdt11f) at (-6,2.6) {};
\node[](abctf) at (-3,2.6) {};
\node[](nderf) at (-0,2.6) {};
\node[](abf) at (3,2.6) {};
\node[](qd100f) at (6,2.6) {};

\node[circle,fill,inner sep=0pt,minimum size=3pt,label=above:{$\operatorname{QC}(\mathfrak{A}) \times \mathcal{D}(t,1,1)(\mathfrak{A}) $}] (qdt11) at (-6,2) {};
\node[circle,fill,inner sep=0pt,minimum size=3pt,label=above:{$\mathcal{T}(s)(\mathfrak{A})$}] (abct) at (-3,2) {};
\node[circle,fill,inner sep=0pt,minimum size=3pt,label=above:{$\operatorname{NDer}(\mathfrak{A})$}] (nder) at (0,2) {};
\node[circle,fill,inner sep=0pt,minimum size=3pt,label=above:{$\mathcal{P}(\mathfrak{A})$}] (ab) at (+3,2) {};
\node[circle,fill,inner sep=0pt,minimum size=3pt,label=above:{$\operatorname{QC}(\mathfrak{A})\times \mathcal{D}(1,0,0)(\mathfrak{A})$}] (qd100) at (+6,2) {};

\node[circle,fill,inner sep=0pt,minimum size=3pt,label=below:{$\mathcal{D}(t,1,1)(\mathfrak{A})$}] (dt11) at (-6,0) {};
\node[circle,fill,inner sep=0pt,minimum size=3pt,label=below:{$\mathcal{D}(1,-1,1)(\mathfrak{A})$}] (d1m11) at (-3,0) {};
\node[circle,fill,inner sep=0pt,minimum size=3pt,label=below:{$\operatorname{QC}(\mathfrak{A})$}] (q) at (0,0) {};
\node[circle,fill,inner sep=0pt,minimum size=3pt,label=below:{$\mathcal{D}(u,1,0)(\mathfrak{A})$}] (dt10) at (+3,0) {};
\node[circle,fill,inner sep=0pt,minimum size=3pt,label=below:{$\mathcal{D}(1,0,0)(\mathfrak{A})$}] (d100) at (+6,0) {};

\draw [right hook-stealth, shorten <= 2 pt, shorten >= 2pt] (qdt11f) edge (nxq);
\draw [right hook-stealth, shorten <= 2 pt, shorten >= 2pt] (abctf) edge (nxq);
\draw [left  hook-stealth, shorten <= 2 pt, shorten >= 2pt] (nderf) edge (nxq);
\draw [left hook-stealth, shorten <= 2 pt, shorten >= 2pt] (abf) edge (nxq);
\draw [left hook-stealth, shorten <= 2 pt, shorten >= 2pt] (qd100f) edge (nxq);

\draw [right hook-stealth, shorten <= 2 pt, shorten >= 2pt] (dt11) edge (qdt11);
\draw [right hook-stealth, shorten <= 2 pt, shorten >= 2pt] (dt11) edge node [above=10pt,left=-5pt, rotate=35] {$\varphi_{1}$; \scriptsize  \mbox{$t=1-2s$}} (abct);
%\draw [right hook-stealth, shorten <= 2 pt, shorten >= 2pt] (dt11) edge node [above=-1pt,left=25pt, rotate=35] {$\varphi_{2}$} (nder);
\draw [left hook-stealth, shorten <= 2 pt, shorten >= 2pt] (d1m11) edge node [left=0, above=-20pt] {$\varphi_{2}$} (abct);
\draw [left hook-stealth, shorten <= 2 pt, shorten >= 2pt] (q) edge (qdt11);
\draw [right hook-stealth, shorten <= 2 pt, shorten >= 2pt] (q) edge (qd100);
\draw [left hook-stealth, shorten <= 2 pt, shorten >= 2pt] (dt10) edge node [above=-2pt,left=-20pt, rotate=-20] {$\varphi_{3}$}  (abct);
\draw [left hook-stealth, shorten <= 2 pt, shorten >= 2pt] (dt10) edge node [left=0, above=-20pt] {$\varphi_{4}$}(ab);
\draw [right hook-stealth, shorten <= 2 pt, shorten >= 2pt] (dt10) edge node [below] {$\supseteq$} (q);
%\draw [left hook-stealth, shorten <= 2 pt, shorten >= 2pt] (d100) edge (nder);
\draw [left hook-stealth, shorten <= 2 pt, shorten >= 2pt] (d100) edge (ab);
\draw [left hook-stealth, shorten <= 2 pt, shorten >= 2pt] (d100) edge (qd100);
\draw [left hook-stealth, shorten <= 2 pt, shorten >= 2pt] (ab) edge node [above] {$\varphi_{5}$}(nder);
\draw [right hook-stealth, shorten <= 2 pt, shorten >= 2pt] (abct) edge node [above] {$\varphi_{6}$}(nder);
\draw [left hook-stealth, shorten <= 2 pt, shorten >= 2pt] (d1m11) edge node [below] {$\subseteq$} (q);
\end{tikzpicture}

The respective functions are given by

\begin{itemize}
 \begin{multicols}{2}
  \item $\varphi_{1}(D):=((1-2s)D, D, D)$
% \item $\varphi_{2}(D):=(tD,D)$
  \item $\varphi_{2}(D):=(D,-D,D)$
  \item $\varphi_{3}(M):=(uM,(1-u-s)M,(u+s)M)$
  \item $\varphi_{4}(M):=(uM,M)$
  \item $\varphi_{5}(A,B):=(A,\frac{1}{2}B)$
  \item $\varphi_{6}(A,B,C):=(A,\frac{1}{2}(B+C))$
 \end{multicols}
\end{itemize}

The function $\varphi_{5}$ is well defined, because our anti-commutative or commutative condition on $\mathfrak{A}$ guarantees that if $A\mu(X,Y) = \mu(BX,Y)$ for all $X, Y \in V$, then $A\mu(Y,X) = \mu(BY,X)$ and so $2A\mu(X,Y) = \mu(BX,Y)+ \mu(X,BY)$. In the same manner we can see that $\mathcal{D}(2t,1,1)(\mathfrak{A}) \supseteq \mathcal{D}(t,1,0)(\mathfrak{A})$. Also, it is important to notice that $\varphi_{6} \circ \varphi_{3} (M) =  \varphi_{5} \circ \varphi_{4} (M) = (uM, \frac{1}{2}M)$.

We also have the injective functions:
\begin{itemize}
\item If $1 \neq 2(s+u)$, $\varphi_{7}: \mathcal{D}(1-2s,1,1)(\mathfrak{A}) \times \mathcal{D}(u,1,0)(\mathfrak{A}) \rightarrow \mathcal{T}(s)(\mathfrak{A})$, defined by
$\varphi_{7}(D,M):=\varphi_{1}(D) + \varphi_{3}(M)$ .
\item If $t\neq 2u$, $\varphi_{8}: \mathcal{D}(t,1,1)(\mathfrak{A}) \times \mathcal{D}(u,1,0)(\mathfrak{A}) \rightarrow \operatorname{NDer}(\mathfrak{A})$, given by $\varphi_{8}(D,M):=\varphi_{6}\circ \varphi_{1}(D) +  \varphi_{5}\circ \varphi_{4}(M) = (tD,D) + (uM, \frac{1}{2}M).$
\end{itemize}

Under additional assumptions on $\mathfrak{A}$, we can express $\mathcal{T}(t)(\mathfrak{A})$ and $\mathcal{P}(\mathfrak{A})$ in terms of $(\alpha,\beta,\gamma)$-derivations.
Recall that the \textit{centroid} of $\mathfrak{A} = (V,\mu)$, denoted by $\operatorname{C}(\mathfrak{A})$, is the vector space of the $(1,1,0)$-derivations:
$$
\operatorname{C}(\mathfrak{A}):= \{ M \in L^{1}(V;V) : M\mu(X, Y)=\mu(M X,  Y) \mbox{ for all } X,Y \in V  \}.
$$

\begin{theorem}\label{teoremaABCs}
Let  $\mathfrak{A}=(V,\mu)$ be an anti-commutative (or commutative) algebra such that $\operatorname{QC}(\mathfrak{A}) = \operatorname{C}(\mathfrak{A})$. If $t\neq -\frac{1}{2}$, then $\mathcal{T}(t)(\mathfrak{A})$ is a vector space isomorphic to $\mathcal{D}(1-2t,1,1)(\mathfrak{A}) \times \operatorname{C}(\mathfrak{A})$.
\end{theorem}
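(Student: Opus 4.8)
The plan is to show that the injective linear map $\varphi_{7}$ introduced just above, specialized to $s=t$ and $u=1$, is in fact an isomorphism onto $\mathcal{T}(t)(\mathfrak{A})$. With $s=t$ and $u=1$ the domain of $\varphi_{7}$ becomes $\mathcal{D}(1-2t,1,1)(\mathfrak{A}) \times \mathcal{D}(1,1,0)(\mathfrak{A}) = \mathcal{D}(1-2t,1,1)(\mathfrak{A}) \times \operatorname{C}(\mathfrak{A})$ by definition of the centroid, and the standing condition ``$1 \neq 2(s+u)$'' under which $\varphi_{7}$ is well defined and injective reads ``$1 \neq 2(t+1)$'', that is, $t \neq -\tfrac{1}{2}$ --- exactly our hypothesis on $t$. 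So $\varphi_{7}$ is a well-defined injective linear map $\mathcal{D}(1-2t,1,1)(\mathfrak{A}) \times \operatorname{C}(\mathfrak{A}) \to \mathcal{T}(t)(\mathfrak{A})$, and it remains only to prove it is onto; this is the single step in which the hypothesis $\operatorname{QC}(\mathfrak{A}) = \operatorname{C}(\mathfrak{A})$ is used.

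To prove surjectivity I would first pass $\mathcal{T}(t)(\mathfrak{A})$ through the isomorphism $\varphi$ of Proposition \ref{descomposicion}. Given $(A,B,C) \in \mathcal{T}(t)(\mathfrak{A})$, set $P := A$, $Q := \tfrac{1}{2}(B+C)$, $R := \tfrac{1}{2}(B-C)$, so that $(P,Q) \in \operatorname{NDer}(\mathfrak{A})$, $R \in \operatorname{QC}(\mathfrak{A})$, and --- substituting $B = Q+R$, $C = Q-R$ --- the defining relation $A + tB + (t-1)C = 0$ of $\mathcal{T}(t)(\mathfrak{A})$ is equivalent to $P = (1-2t)Q - R$. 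By hypothesis $R \in \operatorname{C}(\mathfrak{A})$. I then propose the explicit preimage
$$
D := Q + \frac{1}{1+2t}\,R, \qquad M := -\frac{2}{1+2t}\,R \in \operatorname{C}(\mathfrak{A}),
$$
which is legitimate because $t \neq -\tfrac{1}{2}$.

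Two checks remain. The routine one is $\varphi_{7}(D,M) = (A,B,C)$: since $\varphi_{7}(D,M) = \varphi_{1}(D) + \varphi_{3}(M) = \bigl((1-2t)D + M,\; D - tM,\; D + (1+t)M\bigr)$, substituting the formulas for $D$ and $M$ gives $D - tM = Q + R = B$, $D + (1+t)M = Q - R = C$, and $(1-2t)D + M = (1-2t)Q - R = P = A$, the last equality using $P = (1-2t)Q - R$. The one substantive computation is that $D \in \mathcal{D}(1-2t,1,1)(\mathfrak{A})$: expanding $\mu(DX,Y) + \mu(X,DY)$, the $Q$-part equals $\mu(QX,Y) + \mu(X,QY) = P\mu(X,Y)$ because $(P,Q) \in \operatorname{NDer}(\mathfrak{A})$, while the $R$-part equals $\tfrac{1}{1+2t}\bigl(\mu(RX,Y) + \mu(X,RY)\bigr) = \tfrac{2}{1+2t}R\mu(X,Y)$, since $R \in \operatorname{C}(\mathfrak{A})$ together with anti-commutativity (or commutativity) forces $\mu(RX,Y) = \mu(X,RY) = R\mu(X,Y)$. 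Hence $\mu(DX,Y) + \mu(X,DY) = \bigl(P + \tfrac{2}{1+2t}R\bigr)\mu(X,Y)$, and substituting $P = (1-2t)Q - R$ once more yields $P + \tfrac{2}{1+2t}R = (1-2t)\bigl(Q + \tfrac{1}{1+2t}R\bigr) = (1-2t)D$, i.e.\ $(1-2t)D\mu(X,Y) = \mu(DX,Y) + \mu(X,DY)$. This proves $\varphi_{7}$ is surjective, hence the desired isomorphism.

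I do not expect a real obstacle: after the specialization $s = t$, $u = 1$, the whole argument is bookkeeping with the splitting $\operatorname{Der}_{0_{2\times 3}}(\mathfrak{A}) \cong \operatorname{NDer}(\mathfrak{A}) \times \operatorname{QC}(\mathfrak{A})$ and with centroid identities, and $\operatorname{QC}(\mathfrak{A}) = \operatorname{C}(\mathfrak{A})$ is invoked exactly once --- to ensure that the candidate $M$ (equivalently $R$) is a centroid element, which is precisely what makes $D$ an honest $(1-2t,1,1)$-derivation. An equivalent alternative would bypass $\varphi_{7}$ and instead identify $\varphi(\mathcal{T}(t)(\mathfrak{A}))$ with $(\operatorname{NDer}(\mathfrak{A}) \times \operatorname{QC}(\mathfrak{A}))\mho$ for $\mho = \left(\begin{array}{ccc} 1 & 2t-1 & 1 \\ 0 & 0 & 0 \end{array}\right)$ via Corollary \ref{isoextended} and decompose that space directly, but reusing $\varphi_{7}$ keeps the argument shortest.
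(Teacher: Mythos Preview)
Your proof is correct and follows essentially the same approach as the paper: both arguments show that $\varphi_{7}$ (with $s=t$, $u=1$) is an isomorphism by exhibiting its inverse, and your preimage $(D,M)$ is exactly the paper's map $\psi(A,B,C)=\bigl(\tfrac{1}{2t+1}((t+1)B+tC),\ \tfrac{1}{2t+1}(C-B)\bigr)$ rewritten in the $(P,Q,R)$ coordinates of Proposition~\ref{descomposicion}. The only cosmetic difference is that the paper verifies well-definedness of $\psi$ by a direct $(A,B,C)$ computation, whereas your use of the $(P,Q,R)$ splitting makes the check that $D\in\mathcal{D}(1-2t,1,1)(\mathfrak{A})$ somewhat more transparent.
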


\begin{proof}
 Consider the linear transformation $\psi: \mathcal{T}(t)(\mathfrak{A}) \rightarrow \mathcal{D}(1-2t,1,1)(\mathfrak{A}) \times \operatorname{C}(\mathfrak{A})$ defined by
$$
\psi(A,B,C) = \left( \frac{1}{2t+1}((t+1)B+ tC), \frac{1}{2t+1}(C-B) \right).
$$
We must show that $\psi$ is well defined. Let $(A,B,C) \in \mathcal{T}(t)(\mathfrak{A})$:
\begin{eqnarray*}
  \left\{
  \begin{array}{l}
    A + tB+(t-1)C=0,\\
    A\mu(X,Y)=\mu(BX,Y) + \mu(X,CY).
  \end{array}
  \right.
\end{eqnarray*}
By Proposition \ref{descomposicion} we have $C - B \in \operatorname{QC}(\mathfrak{A}) = \operatorname{C}(\mathfrak{A})$.

Now, note that
\begin{eqnarray*}
\frac{1-2t}{2t+1}( (t+1)B + tC ) + \frac{1}{2t+1}(C-B) &=& \frac{1}{2t+1}  {\Big[}  ((1-2t)(t+1)-1)B + ((1-2t)t+1)C {\Big]}\\
&=& \frac{1}{2t+1}  {\Big[}  -t(2t+1)B - (t-1)(2t+1)C {\Big]}\\
&=& A.
\end{eqnarray*}
Therefore
\begin{eqnarray*}
\frac{1-2t}{2t+1}( (t+1)B + tC ) \mu(X,Y) & = & \left(A - \frac{1}{2t+1}(C-B) \right)  \mu(X,Y) \\
 & = &  \mu(BX,Y) + \mu(X, CY) - \frac{1}{2t+1}(C-B)\mu(X,Y) \\
 & = &  \mu(BX,Y) + \mu(X, CY) +\frac{1}{2}(C-B)\mu(X,Y) - \frac{1}{2}(C-B)\mu(X,Y)\\
 &    &- \frac{1}{4t+2}(C-B)\mu(X,Y) - \frac{1}{4t+2}(C-B)\mu(X,Y)\\
 & = & \mu\left( [B + \frac{1}{2}(C-B) - \frac{1}{4t+2}(C-B) ]X,Y \right) \\
 &   &+ \mu\left( X, [C - \frac{1}{2}(C-B) - \frac{1}{4t+2}(C-B) ]Y \right) \\
 & = & \mu\left( \frac{1}{2t+1}[(t+1)B + tC] X, Y \right)
 +\mu\left( X , \frac{1}{2t+1}[(t+1)B + tC] Y\right),  \\
\end{eqnarray*}
where the next-to-last line follows by using that $(C-B)$ is in $\operatorname{QC}(\mathfrak{A})$, which is equal to $\operatorname{C}(\mathfrak{A})$ by hypothesis. Therefore, $\frac{1}{2t+1} ((t+1)B + tC)$ is in $\mathcal{D}(1-2t,1,1)$.

Since $\mathcal{D}(1,1,0)(\mathfrak{A}) = \operatorname{C}(\mathfrak{A})$, it is straightforward to verify that the function $\psi$ is the inverse of $\varphi_{7}: \mathcal{D}(1-2s,1,1)(\mathfrak{A}) \times \mathcal{D}(1,1,0)(\mathfrak{A}) \rightarrow \mathcal{T}(s)(\mathfrak{A})$, $\varphi_{7}(D,M) = ((1-2t)D + M,D -tM,D+(t+1)R)$.

\end{proof}

\begin{remark}
If $\mathfrak{A}$ is an anti-commutative (or commutative) algebra such that $\operatorname{QC}(\mathfrak{A}) = \operatorname{C}(\mathfrak{A})$, it is a simple matter to show that
$\mathfrak{A}$ must be centerless or perfect; i.e. $\operatorname{Z}(\mathfrak{A})=\{0\}$ or $\mathfrak{A}=\mu(\mathfrak{A},\mathfrak{A})$.
Although the condition $\operatorname{QC}(\mathfrak{A}) = \operatorname{C}(\mathfrak{A})$ looks restrictive, it is worth pointing out that such equality is satisfied by some families of Lie algebras;  for example, if $\mathfrak{A}$ is a centerless and perfect Lie algebra (see \cite[Subsection 5.4]{legerluks}).

Other remarkable condition studied in \cite{legerluks} is the equality between the quasiderivations of $\mathfrak{A}$, $\operatorname{QDer}(\mathfrak{A})$, and the vector space $\operatorname{Der}(\mathfrak{A})+  \operatorname{C}(\mathfrak{A})$ (see for instance \cite[Corollary 4.16]{legerluks}). Note that if $\mathfrak{A}$ is an anti-commutative (or commutative) algebra which is centerless or perfect, then $\operatorname{Der}(\mathfrak{A}) \cap \operatorname{C}(\mathfrak{A}) = \{0\}$.
\end{remark}

\begin{proposition}
 Let  $\mathfrak{A}=(V,\mu)$ be a perfect anti-commutative (or commutative) algebra such that $\operatorname{Der}(\mathfrak{A}) \oplus  \operatorname{C}(\mathfrak{A}) = \operatorname{QDer}(\mathfrak{A})$. Then if $t\neq1,2$, we have $\mathcal{D}(t,1,1)(\mathfrak{A})= \{0\}$, and $\mathcal{D}(2,1,1)(\mathfrak{A}) =  \operatorname{C}(\mathfrak{A})$.
\end{proposition}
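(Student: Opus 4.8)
The plan is to show first that every $(t,1,1)$-derivation is automatically a quasiderivation, and then to feed the decomposition supplied by the hypothesis back into the defining identity. Indeed, if $D\in\mathcal{D}(t,1,1)(\mathfrak{A})$ then $t\,D\mu(X,Y)=\mu(DX,Y)+\mu(X,DY)$ for all $X,Y\in V$, so $(tD,D)\in\operatorname{NDer}(\mathfrak{A})$ and hence $D\in\operatorname{QDer}(\mathfrak{A})$. By hypothesis we may therefore write $D=D_1+D_2$ with $D_1\in\operatorname{Der}(\mathfrak{A})$ and $D_2\in\operatorname{C}(\mathfrak{A})$, the sum being direct; in particular $\operatorname{Der}(\mathfrak{A})\cap\operatorname{C}(\mathfrak{A})=\{0\}$ (this also follows from perfectness, as recorded in the preceding remark). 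Substituting $D=D_1+D_2$ into the $(t,1,1)$-identity, and using that $D_1\mu(X,Y)=\mu(D_1X,Y)+\mu(X,D_1Y)$ while $\mu(D_2X,Y)=\mu(X,D_2Y)=D_2\mu(X,Y)$ (the second equality being where anti-commutativity or commutativity of $\mathfrak{A}$ enters), the identity collapses to
$$
(t-1)\,D_1\mu(X,Y)+(t-2)\,D_2\mu(X,Y)=0 \quad \mbox{ for all } X,Y\in V .
$$

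Next I would invoke perfectness of $\mathfrak{A}$: since $V=\mu(\mathfrak{A},\mathfrak{A})$, every vector of $V$ is a finite sum of products $\mu(X,Y)$, so the displayed pointwise identity upgrades to the operator identity $(t-1)D_1+(t-2)D_2=0$ on $V$. If $t\neq 1$ and $t\neq 2$, both coefficients are nonzero, so $D_1=\frac{2-t}{t-1}D_2$ is at once a derivation and a scalar multiple of a centroid element; hence $D_2\in\operatorname{Der}(\mathfrak{A})\cap\operatorname{C}(\mathfrak{A})=\{0\}$, whence $D_2=0$, $D_1=0$ and $D=0$. This proves $\mathcal{D}(t,1,1)(\mathfrak{A})=\{0\}$ for $t\neq 1,2$. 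For $t=2$ the operator identity becomes $D_1=0$, so $D=D_2\in\operatorname{C}(\mathfrak{A})$, giving $\mathcal{D}(2,1,1)(\mathfrak{A})\subseteq\operatorname{C}(\mathfrak{A})$; the reverse inclusion is the already observed fact that $\operatorname{C}(\mathfrak{A})=\mathcal{D}(1,1,0)(\mathfrak{A})\subseteq\mathcal{D}(2,1,1)(\mathfrak{A})$ (for $M\in\operatorname{C}(\mathfrak{A})$ one has $\mu(MX,Y)+\mu(X,MY)=2M\mu(X,Y)$). Hence $\mathcal{D}(2,1,1)(\mathfrak{A})=\operatorname{C}(\mathfrak{A})$.

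All the computations are routine linear algebra; the only steps needing a bit of attention are the reduction of the substituted identity to the two-term relation above and the passage from that pointwise relation to the operator relation, which is precisely where perfectness is used. Once that is in hand, everything is forced by $\operatorname{Der}(\mathfrak{A})\cap\operatorname{C}(\mathfrak{A})=\{0\}$, so I do not anticipate any genuine obstacle.
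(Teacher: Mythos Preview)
Your proof is correct and follows essentially the same approach as the paper: decompose an element of $\mathcal{D}(t,1,1)(\mathfrak{A})\subseteq\operatorname{QDer}(\mathfrak{A})$ into its derivation and centroid parts, substitute into the defining identity, use perfectness to obtain the operator relation $(t-1)D_1+(t-2)D_2=0$, and conclude via $\operatorname{Der}(\mathfrak{A})\cap\operatorname{C}(\mathfrak{A})=\{0\}$. If anything, your write-up is slightly more explicit than the paper's about where perfectness and the directness of the sum are invoked.
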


\begin{proof}
Since $ \mathcal{D}(t,1,1)(\mathfrak{A}) \subseteq \operatorname{QDer}(\mathfrak{A})$, if $Q \in \mathcal{D}(t,1,1)(\mathfrak{A})$, then there exist a $D \in \operatorname{Der}(\mathfrak{A})$ and $M \in \operatorname{C}(\mathfrak{A})$, such that $Q = D + M$.

By definition of $(t,1,1)$-derivation,
\begin{eqnarray*}
 t(D+M)\mu(X,Y) & = & tQ \mu(X,Y)\\
                & = & \mu(QX,Y) + \mu(X,QY)\\
                & = & \mu((D+M)X,Y) + \mu(X,(D+M)Y)\\
                & = & D\mu(X,Y) + 2M\mu(X,Y)
\end{eqnarray*}
Since $\mathfrak{A}$ is perfect, it follows $(t-1)D = (2-t)M = 0$. Thus, if $t\neq 1,2$, we have $D=0$ and $M=0$, which makes $\mathcal{D}(t,1,1)(\mathfrak{A}) = \{0\}$. In the case $t=2$, we have $D=0$, and so $\mathcal{D}(2,1,1)(\mathfrak{A}) \subseteq \operatorname{C}(\mathfrak{A})$. Since $\operatorname{C}(\mathfrak{A}) = \mathcal{D}(1,1,0)(\mathfrak{A})$ is a subspace of $\mathcal{D}(2,1,1)(\mathfrak{A})$, then $\mathcal{D}(2,1,1)(\mathfrak{A}) = \operatorname{C}(\mathfrak{A})$.
\end{proof}

\begin{corollary}
Under the hypotheses of the preceding proposition, if $u\neq \frac{1}{2},1$, then $D(u,1,0)(\mathfrak{A}) = \{0\}$.
\end{corollary}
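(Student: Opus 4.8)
The plan is to deduce this directly from the preceding proposition by means of the inclusion $\mathcal{D}(u,1,0)(\mathfrak{A}) \subseteq \mathcal{D}(2u,1,1)(\mathfrak{A})$ that was recorded just before that proposition. So the first step is to recall why that inclusion holds in our setting. If $M \in \mathcal{D}(u,1,0)(\mathfrak{A})$, then $u\, M\mu(X,Y) = \mu(MX,Y)$ for all $X,Y \in V$; applying anti-commutativity (or commutativity) of $\mathfrak{A}$ to the pair $(Y,X)$ gives $u\, M\mu(X,Y) = \mu(X,MY)$ as well, and averaging the two identities yields $2u\, M\mu(X,Y) = \mu(MX,Y) + \mu(X,MY)$, which is exactly the defining relation for $M \in \mathcal{D}(2u,1,1)(\mathfrak{A})$.

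The second step is pure bookkeeping of the parameters. The hypothesis $u \neq \frac{1}{2}, 1$ is equivalent to $2u \neq 1, 2$, so the preceding proposition applies with $t = 2u$ and gives $\mathcal{D}(2u,1,1)(\mathfrak{A}) = \{0\}$. Combining this with the inclusion from the first step forces $\mathcal{D}(u,1,0)(\mathfrak{A}) = \{0\}$, which is the assertion (here $D(u,1,0)(\mathfrak{A})$ in the statement should of course read $\mathcal{D}(u,1,0)(\mathfrak{A})$).

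I do not expect any real obstacle: the corollary is an immediate formal consequence of the proposition once the parameter-doubling inclusion is invoked, and all the hard work — the splitting $Q = D + M$ via $\operatorname{Der}(\mathfrak{A}) \oplus \operatorname{C}(\mathfrak{A}) = \operatorname{QDer}(\mathfrak{A})$ together with perfectness of $\mathfrak{A}$ — has already been carried out in the proof of the proposition. The only point deserving a line of care is to make explicit that the values excluded for the proposition, $t = 1$ and $t = 2$, correspond under $t = 2u$ precisely to the excluded values $u = \frac{1}{2}$ and $u = 1$ here, so that no case is lost in the translation.
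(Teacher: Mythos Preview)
Your proof is correct and is precisely the argument the paper intends: the corollary is stated without proof because it follows immediately from the preceding proposition via the inclusion $\mathcal{D}(t,1,0)(\mathfrak{A}) \subseteq \mathcal{D}(2t,1,1)(\mathfrak{A})$ recorded earlier (just after the diagram of canonical embeddings), together with the parameter translation $t = 2u$.
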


The preceding proposition and the following result shows that when $t=-\frac{1}{2}$, the analogue of Theorem \ref{teoremaABCs} do not hold.

\begin{proposition}
Let  $\mathfrak{A}=(V,\mu)$ be a perfect anti-commutative (or commutative) algebra such that $\operatorname{QC}(\mathfrak{A}) = \operatorname{C}(\mathfrak{A})$ and $\operatorname{Der}(\mathfrak{A}) \oplus  \operatorname{C}(\mathfrak{A}) = \operatorname{QDer}(\mathfrak{A})$. Then $\mathcal{T}(-\frac{1}{2})(\mathfrak{A})$  is a vector space isomorphic to $\operatorname{C}(\mathfrak{A})$.
\end{proposition}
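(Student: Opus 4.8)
The plan is to pull $\mathcal{T}(-\frac{1}{2})(\mathfrak{A})$ back through the decomposition $\varphi\colon \operatorname{Der}_{0_{2\times3}}(\mathfrak{A}) \to \operatorname{NDer}(\mathfrak{A}) \times \operatorname{QC}(\mathfrak{A})$ of Proposition~\ref{descomposicion}, and then to use perfectness together with the two hypotheses to collapse the quasicentroid coordinate. Recall $\varphi(A,B,C) = ((A,\frac{1}{2}(B+C)),\frac{1}{2}(B-C))$, with inverse $((P,Q),R)\mapsto(P,Q+R,Q-R)$. Substituting $A=P$, $B=Q+R$, $C=Q-R$ into the defining relation $A-\frac{1}{2}B-\frac{3}{2}C=0$ of $\mathcal{T}(-\frac{1}{2})(\mathfrak{A})$ turns it into $P=2Q-R$. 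So the first step is merely to record that $\varphi$ restricts to an isomorphism of $\mathcal{T}(-\frac{1}{2})(\mathfrak{A})$ onto $\{((P,Q),R)\in\operatorname{NDer}(\mathfrak{A})\times\operatorname{QC}(\mathfrak{A}) : P=2Q-R\}$.

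Next I would bring in the hypotheses. Since $\operatorname{QC}(\mathfrak{A})=\operatorname{C}(\mathfrak{A})$ we have $R\in\operatorname{C}(\mathfrak{A})$; and since the middle entry $Q$ of a nearly derivation is a quasiderivation, the hypothesis $\operatorname{QDer}(\mathfrak{A})=\operatorname{Der}(\mathfrak{A})\oplus\operatorname{C}(\mathfrak{A})$ lets us write $Q=D+N$ with $D\in\operatorname{Der}(\mathfrak{A})$, $N\in\operatorname{C}(\mathfrak{A})$. The key computation is to insert $Q=D+N$ and $P=2Q-R=2D+2N-R$ into the nearly-derivation identity $P\mu(X,Y)=\mu(QX,Y)+\mu(X,QY)$ and expand the right-hand side using $\mu(DX,Y)+\mu(X,DY)=D\mu(X,Y)$ (as $D$ is a derivation) and $\mu(NX,Y)=\mu(X,NY)=N\mu(X,Y)$ (as $N$ is in the centroid of an anti-commutative or commutative algebra). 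After cancelling the common terms this reduces to $(D-R)\mu(X,Y)=0$ for all $X,Y\in V$, and since $\mathfrak{A}$ is perfect $\mu(V,V)$ spans $V$, whence $D=R$.

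The final step is to note that $D=R$ belongs to $\operatorname{Der}(\mathfrak{A})\cap\operatorname{C}(\mathfrak{A})$, which is trivial because $\mathfrak{A}$ is perfect (an element $D$ of the intersection satisfies both $\mu(DX,Y)=\mu(X,DY)=D\mu(X,Y)$ and $D\mu(X,Y)=\mu(DX,Y)+\mu(X,DY)$, forcing $D\mu(X,Y)=0$, hence $D=0$). Therefore $R=0$ and $Q=N\in\operatorname{C}(\mathfrak{A})$, so $P=2N$; transporting back, every element of $\mathcal{T}(-\frac{1}{2})(\mathfrak{A})$ is of the form $(2N,N,N)$ with $N\in\operatorname{C}(\mathfrak{A})$. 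Conversely a one-line check shows that $(2N,N,N)$ lies in $\mathcal{T}(-\frac{1}{2})(\mathfrak{A})$ for every $N\in\operatorname{C}(\mathfrak{A})$ (the constraint becomes $2N-\frac{1}{2}N-\frac{3}{2}N=0$ and the derivation identity becomes $2N\mu(X,Y)=\mu(NX,Y)+\mu(X,NY)$, which holds since $N\in\operatorname{C}(\mathfrak{A})$). Hence $(A,B,C)\mapsto B$ is a linear isomorphism $\mathcal{T}(-\frac{1}{2})(\mathfrak{A})\to\operatorname{C}(\mathfrak{A})$ with inverse $N\mapsto(2N,N,N)$. The only delicate point is the bookkeeping in the key computation — keeping track of how the companion contributions of $D$ and $N$ combine so that the residual equation is exactly $(D-R)\mu(X,Y)=0$; once that is in place, everything follows from perfectness and $\operatorname{Der}(\mathfrak{A})\cap\operatorname{C}(\mathfrak{A})=\{0\}$.
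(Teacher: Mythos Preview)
Your proof is correct and follows essentially the same route as the paper's: both pass through the decomposition of Proposition~\ref{descomposicion}, write the quasiderivation component as $D+N$ with $D\in\operatorname{Der}(\mathfrak{A})$ and $N\in\operatorname{C}(\mathfrak{A})$, reduce the nearly-derivation identity to $(D-R)\mu(X,Y)=0$, and then invoke perfectness together with $\operatorname{Der}(\mathfrak{A})\cap\operatorname{C}(\mathfrak{A})=\{0\}$ to force $D=R=0$. The only cosmetic difference is normalization: the paper records the resulting isomorphism as $M\mapsto(M,\tfrac{1}{2}M,\tfrac{1}{2}M)$ (i.e., projection onto the first coordinate), while you use $N\mapsto(2N,N,N)$ (projection onto the second), which is the same map up to a scalar.
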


\begin{proof}
If $(A,B,C) \in \mathcal{T}(-\frac{1}{2})(\mathfrak{A})$, then then by Proposition \ref{descomposicion}, $B+C \in \operatorname{QDer}(\mathfrak{A})$ and $B-C \in \operatorname{QC}(\mathfrak{A})$. The hypothesis then implies that there are $D \in \operatorname{Der}(\mathfrak{A})$, $M, N \in \operatorname{C}(\mathfrak{A})=\operatorname{QC}(\mathfrak{A})$ such that $B+C = 2(D+M)$ and $B-C=2N$. Thus, $B=D+M+N$ and $C=D+M-N$. By definition of  $\mathcal{T}(-\frac{1}{2})(\mathfrak{A})$, $2 A = B + 3C$ and
\begin{eqnarray*}
  (B+3C)\mu(X,Y) & = & 2A\mu(X,Y) \\
                 & = & \mu(2BX,Y) + \mu(X,2C)\\
                 & = & \mu(2(D+M+N)X,Y) + \mu(X,2(D+M-N)Y)\\
                 & = & 2D\mu(X,Y) + 4M\mu(X,Y)\\
                 & = & (2D+4M)\mu(X,Y)
\end{eqnarray*}
Since $B+3C = 4D+4M-2N$ and $\mathfrak{A}$ is perfect, we have $D=N = 0$ (recall $\operatorname{Der}(\mathfrak{A}) \cap \operatorname{C}(\mathfrak{A}) = \{0\}$).

Finally, $D=N = 0$ implies that $(A,B,C)=(M,\frac{1}{2}M,\frac{1}{2}M)$ with $M \in \operatorname{C}(\mathfrak{A})$. Therefore, the injective linear transformation $\varphi_{3}: \mathcal{D}(1,1,0)(\mathfrak{A}) \rightarrow  \mathcal{T}(-\frac{1}{2})(\mathfrak{A})$, $\varphi_{3}(M) = (M,\frac{1}{2}M,\frac{1}{2}M)$ is surjective.

\end{proof}

\begin{proposition}
Let  $\mathfrak{A}=(V,\mu)$ be a perfect anti-commutative (or commutative) algebra such that $\operatorname{QDer}(\mathfrak{A}) = \operatorname{Der}(\mathfrak{A}) \oplus  \operatorname{C}(\mathfrak{A})$. Then $\operatorname{NDer}(\mathfrak{A})$ is isomorphic to $\operatorname{Der}(\mathfrak{A}) \oplus  \operatorname{C}(\mathfrak{A})$.
\end{proposition}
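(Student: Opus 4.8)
The plan is to realize $\operatorname{NDer}(\mathfrak{A})$ as an isomorphic copy of $\operatorname{QDer}(\mathfrak{A})$ via the projection onto the second component, and then to invoke the standing hypothesis $\operatorname{QDer}(\mathfrak{A}) = \operatorname{Der}(\mathfrak{A}) \oplus \operatorname{C}(\mathfrak{A})$. Concretely, I would consider the linear map $\pi: \operatorname{NDer}(\mathfrak{A}) \rightarrow \operatorname{QDer}(\mathfrak{A})$ given by $\pi(P,Q) = Q$. It is well defined because the relation $P\mu(X,Y) = \mu(QX,Y) + \mu(X,QY)$ for all $X,Y \in V$, which defines membership of $(P,Q)$ in $\operatorname{NDer}(\mathfrak{A})$, is precisely the statement that $Q$ is a quasiderivation of $\mathfrak{A}$; and it is linear since the defining equations of $\operatorname{NDer}(\mathfrak{A})$ are linear in the pair $(P,Q)$.

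Surjectivity of $\pi$ is immediate from the definition of quasiderivation: for each $Q \in \operatorname{QDer}(\mathfrak{A})$ there exists, by definition, some $P \in L^{1}(V;V)$ with $(P,Q) \in \operatorname{NDer}(\mathfrak{A})$, so $\pi(P,Q) = Q$. For injectivity I would take $(P,Q) \in \ker \pi$, so that $Q = 0$ and hence $P\mu(X,Y) = 0$ for all $X,Y \in V$. Since $\mathfrak{A}$ is perfect, $V = \mu(\mathfrak{A},\mathfrak{A})$ is linearly spanned by the products $\mu(X,Y)$, and therefore $P = 0$. Thus $\ker \pi = \{0\}$ and $\pi$ is a linear isomorphism; in particular $P$ is uniquely determined by $Q$, so one could even write down $\pi^{-1}$ explicitly if desired.

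Combining $\operatorname{NDer}(\mathfrak{A}) \cong \operatorname{QDer}(\mathfrak{A})$ with the hypothesis $\operatorname{QDer}(\mathfrak{A}) = \operatorname{Der}(\mathfrak{A}) \oplus \operatorname{C}(\mathfrak{A})$ then yields the desired isomorphism. I expect essentially no obstacle here; the only point deserving attention is that perfectness is used in the strong form ``$V$ is linearly spanned by the values $\mu(X,Y)$'', which is exactly the assertion $\mathfrak{A} = \mu(\mathfrak{A},\mathfrak{A})$, and that neither anti-commutativity nor the commutative alternative is genuinely needed for this particular argument beyond being part of the hypotheses that make the decomposition $\operatorname{QDer}(\mathfrak{A}) = \operatorname{Der}(\mathfrak{A}) \oplus \operatorname{C}(\mathfrak{A})$ available.
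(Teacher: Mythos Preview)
Your argument is correct and arguably cleaner than the paper's. You observe that the projection $\pi:\operatorname{NDer}(\mathfrak{A})\to\operatorname{QDer}(\mathfrak{A})$, $(P,Q)\mapsto Q$, is surjective by definition and injective by perfectness, so $\operatorname{NDer}(\mathfrak{A})\cong\operatorname{QDer}(\mathfrak{A})$; the hypothesis then finishes things. The paper instead works with the explicit injective map $\varphi_{8}:\operatorname{Der}(\mathfrak{A})\times\operatorname{C}(\mathfrak{A})\to\operatorname{NDer}(\mathfrak{A})$, $(D,M)\mapsto(D+M,\,D+\tfrac{1}{2}M)$, introduced earlier in its diagram of canonical embeddings, and proves it is surjective: given $(P,Q)\in\operatorname{NDer}(\mathfrak{A})$ it uses the decomposition hypothesis to write $Q=D+\tfrac{1}{2}M$, computes $P\mu(X,Y)=(D+M)\mu(X,Y)$, and then invokes perfectness to conclude $P=D+M$. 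Your route separates concerns nicely (it shows $\operatorname{NDer}(\mathfrak{A})\cong\operatorname{QDer}(\mathfrak{A})$ for \emph{any} perfect algebra, independent of the decomposition hypothesis), while the paper's route earns the explicit description of the isomorphism $\varphi_{8}$, which fits into the larger picture of the maps $\varphi_{1},\ldots,\varphi_{8}$ developed there.
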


\begin{proof}
  Let $(P,Q) \in \operatorname{NDer}(\mathfrak{A})$ be arbitrary. Then there is $D \in \operatorname{Der}(\mathfrak{A})$ and $M \in \operatorname{C}(\mathfrak{A})$ such that $Q = D + \frac{1}{2}M$. The definition of $\operatorname{NDer}(\mathfrak{A})$ implies that
\begin{eqnarray*}
  P\mu(X,Y) &=& \mu(QX,Y) + \mu(X,QY)\\
         &=& \mu((D + \frac{1}{2}M)X,Y) + \mu(X,(D + \frac{1}{2}M)Y)\\
         &=& D\mu(X,Y)+M\mu(X,Y)\\
         &=& (D+M)\mu(X,Y).
\end{eqnarray*}
Since $\mathfrak{A}$ is perfect, we have $P=D+M$, and so the injective linear transformation $\varphi_{8}: \mathcal{D}(1,1,1)(\mathfrak{A}) \times \mathcal{D}(1,1,0)(\mathfrak{A}) \rightarrow \operatorname{NDer}(\mathfrak{A})$, $\varphi_{8}(D,M)=(D+M,D+\frac{1}{2}M)$ is surjective.
\end{proof}

\begin{proposition}
If  $\mathfrak{A}=(V,\mu)$ be a perfect anti-commutative (or commutative) algebra such that $\operatorname{QC}(\mathfrak{A}) = \operatorname{C}(\mathfrak{A})$, then $\mathcal{P}(\mathfrak{A})$ is isomorphic to $\operatorname{C}(\mathfrak{A})$.
\end{proposition}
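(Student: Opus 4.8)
The plan is to exhibit an explicit linear isomorphism $\Phi:\operatorname{C}(\mathfrak{A}) \to \mathcal{P}(\mathfrak{A})$, namely $\Phi(M)=(M,M)$; this is precisely the map $\varphi_{4}$ from the diagram above in the special case $u=1$ (where $\mathcal{D}(1,1,0)(\mathfrak{A})=\operatorname{C}(\mathfrak{A})$), and the claim amounts to showing that in the present setting it is surjective. That $\Phi$ is well defined is immediate: if $M\in\operatorname{C}(\mathfrak{A})=\mathcal{D}(1,1,0)(\mathfrak{A})$ then $M\mu(X,Y)=\mu(MX,Y)$ for all $X,Y\in V$, so $(M,M)\in\mathcal{P}(\mathfrak{A})$. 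Linearity and injectivity are clear, since $\Phi(M)=0$ forces $M=0$.

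The substance is surjectivity, and it will be established by the same symmetrization that makes $\varphi_{5}$ well defined. Let $(A,B)\in\mathcal{P}(\mathfrak{A})$, so $A\mu(X,Y)=\mu(BX,Y)$ for all $X,Y\in V$. Interchanging $X$ and $Y$ and using anti-commutativity (or commutativity) of $\mu$ yields $A\mu(X,Y)=\mu(X,BY)$ for all $X,Y\in V$. Subtracting the two identities gives $\mu(BX,Y)-\mu(X,BY)=0$ for all $X,Y$, that is, $B\in\operatorname{QC}(\mathfrak{A})$. By hypothesis $\operatorname{QC}(\mathfrak{A})=\operatorname{C}(\mathfrak{A})$, so $B\in\operatorname{C}(\mathfrak{A})$ and hence $\mu(BX,Y)=B\mu(X,Y)$ for all $X,Y$.

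Combining, $A\mu(X,Y)=\mu(BX,Y)=B\mu(X,Y)$ for all $X,Y\in V$, so $A-B$ vanishes identically on $\mu(V,V)$. Since $\mathfrak{A}$ is perfect, $\mu(V,V)=V$, and therefore $A=B$. Consequently $(A,B)=(B,B)=\Phi(B)$ with $B\in\operatorname{C}(\mathfrak{A})$, which proves that $\Phi$ is onto, and hence an isomorphism. I do not expect any genuine obstacle here; the proof is short, and the only places where the hypotheses enter are exactly those three steps: anti-commutativity (or commutativity) for the symmetrization, the equality $\operatorname{QC}(\mathfrak{A})=\operatorname{C}(\mathfrak{A})$ to promote $B$ from the quasicentroid to the centroid, and perfectness to force $A=B$.
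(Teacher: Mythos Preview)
Your proof is correct and follows essentially the same approach as the paper's own proof: both show that the injective map $\varphi_{4}(B)=(B,B)$ from $\operatorname{C}(\mathfrak{A})$ to $\mathcal{P}(\mathfrak{A})$ is surjective by taking $(A,B)\in\mathcal{P}(\mathfrak{A})$, observing $B\in\operatorname{QC}(\mathfrak{A})=\operatorname{C}(\mathfrak{A})$, and then using perfectness to conclude $A=B$. Your version simply spells out the symmetrization step for $B\in\operatorname{QC}(\mathfrak{A})$ more explicitly than the paper does.
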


\begin{proof}
 If $(A,B) \in \mathcal{P}(\mathfrak{A})$, we have $B \in \operatorname{QC}(\mathfrak{A})$ and hence $A\mu(X,Y)=\mu(BX,Y)=B\mu(X,Y)$ by our assumption that $\operatorname{QC}(\mathfrak{A}) = \operatorname{C}(\mathfrak{A})$.
 Since $\mathfrak{A}$ is perfect, we have $A=B$ and so $\varphi_{4}: \mathcal{D}(1,1,0)(\mathfrak{A}) \rightarrow   \mathcal{P}(\mathfrak{A})$ given by $\varphi_{4}(B):=(B,B)$ is an isomorphism.
 \end{proof}

\begin{remark}
Note that the notion of extended derivations can be extended in the following way: let $k \in \mathbb{N}$ and let $\Omega$ be any fixed matrix in $\mathrm{M}(3k-1 \times 3k , \mathbb{K})$. An $\Omega$-derivation of an algebra $\mathfrak{A}=(V,\mu)$ is a $k$-tuple of triples $((A_1,B_1,C_1),\ldots,(A_k,B_k,C_k))$ in $((L^{1}(V;V))^{3})^k$ such that for $j=1\ldots k$ and $i=1\ldots 3k-1$
$$
  \left\{
  \begin{array}{l}
  \displaystyle \sum_{m=1}^{k} \left( \Omega[i, 3m-2] A_{m} + \Omega[i, 3m-1] B_{m} + \Omega[i, 3m] C_{m} \right) =0 \\
    A_j\mu(X,Y) = \mu(B_j X,Y) + \mu(X,C_jY) \quad \mbox{ for all } X,Y\in V.
  \end{array}
  \right.
$$
Another possible natural extension is to consider linear equations that involve multilineal maps in different $L^{k}(V;V)$-spaces and \textit{multilinear polynomials} on the algebra, and  relations of linear dependence between the multilinear maps of the same degree; such as \textit{twisted cocycles} defined in \cite[\S 2]{NovotnyHrivnak2} or \textit{Leibniz-derivations} introduced in \cite[Definition 2.1]{moens}.
\end{remark}

\section{Applications}\label{applicaciones}

The spaces of extended derivations are \textit{invariant} under isomorphism of algebras, and are easily calculated; because such spaces correspond to a homogeneous system of linear equations which depend on the algebras. More precisely, let $\operatorname{GL}(V)$ denote the \textit{General Linear Group} of the vector space $V$, which acts on $L^{k}(V;V)$ by \textit{change of basis}: given $g \in \operatorname{GL}(V)$, it acts on a $k$-multilinear map $\mu \in L^{k}(V;V)$ by
$$
g\cdot\mu(\square, \square, \ldots, \square) := g \mu (g^{-1}\square,g^{-1}\square, \ldots, g^{-1}\square).
$$
Two algebras $\mathfrak{A}=(V,\mu)$ and $\mathfrak{B}=(V,\lambda)$ are isomorphic if and only if there exists a linear transformation $g \in \operatorname{GL}(V)$ such that $\lambda = g\cdot \mu$.

Let $\Omega=\left(\begin{array}{ccc} a_{1} & a_{2}& a_{3} \\ a_{4} & a_{5} & a_{6} \\ \end{array} \right)$ be any fixed matrix in $\mathrm{M}(2\times 3 , \mathbb{K})$, and consider the function $\chi_{\Omega}: L^{2}(V;V) \rightarrow L^{1}( (L^{1}(V;V))^3;(L^{1}(V;V))^2 \times L^{2}(V;V))$ defined by
$$
\chi_{\Omega}(\mu)(A,B,C)=  (a_1 A + a_2 B + a_3 C, a_4 A + a_5 B + a_6 C,   A\mu(X,Y)-\mu(BX,Y)-\mu(X,CY)).
$$
The function $\chi_{\Omega}$ is $\operatorname{GL}(V)$-equivariant; i.e. $\chi_{\Omega}(g\cdot \mu) = g \star \chi_{\Omega}(\mu)$, where $\star$ denotes the natural action of $\operatorname{GL}(V)$ on the set of maps $T$ from $L^{1}(V;V))^3$ to $L^{1}(V;V))^2 \times L^{2}(V;V)$: $(g \star T)(\square)= g\cdot(T(g^{-1}\cdot \square ))$. Therefore, if $\mathfrak{B}=(V,g\cdot\mu)$, then
$\operatorname{Der}_{\Omega}(\mathfrak{B}) = \operatorname{Ker}\chi_{\Omega}(g\cdot\mu) = g\cdot\operatorname{Ker}\chi_{\Omega}(\mu) = g\cdot \operatorname{Der}_{\Omega}(\mathfrak{A})$.

In fact, as happens with any \textit{continuous} and $\operatorname{GL}(V)$-equivariant function with domain $L^{2}(V;V)$, the extended derivations can be used to study \textit{degenerations} of algebras. Let us assume that $\mathbb{K}$ is the field of the complex numbers and we equip $L^{k}(\mathbb{C}^{n};\mathbb{C}^{n})$ with its Hausdorff vector space topology.

\begin{definition}
  Let $\mathfrak{A}=(\mathbb{C}^{n},\mu)$ and $\mathfrak{B}=(\mathbb{C}^{n},\lambda)$ be two algebras over $\mathbb{C}$. The algebra $\mathfrak{A}$ is said to \textit{degenerate} to $\mathfrak{B}$, if $\lambda$ is in the closure of the $\operatorname{GL}(\mathbb{C}^{n})$-orbit of $\mu$
\end{definition}

Since $\chi_{D}$ is a continuous function and $\operatorname{GL}(\mathbb{C}^{n})$-equivariant, it follows by the upper semi-continuity of the \textit{nullity function} on linear operators that

\begin{proposition}\label{obstruction}
  If $\mathfrak{A}$ degenerate to $\mathfrak{B}$, then $\operatorname{Dim} \operatorname{Der}_{\Omega}(\mathfrak{A})$ is less than or equal to $\operatorname{Dim} \operatorname{Der}_{\Omega}(\mathfrak{B})$.
\end{proposition}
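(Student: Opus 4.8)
The plan is to turn the upper-semicontinuity remark preceding the statement into a short argument. First I would unwind the definition of degeneration: since $\lambda$ belongs to the closure of the $\operatorname{GL}(\mathbb{C}^n)$-orbit of $\mu$ inside the finite-dimensional (hence first-countable) topological vector space $L^{2}(\mathbb{C}^n;\mathbb{C}^n)$, there is a sequence $(g_m)_{m\in\mathbb{N}}$ in $\operatorname{GL}(\mathbb{C}^n)$ with $g_m\cdot\mu\to\lambda$. Writing $\mathfrak{A}_m=(\mathbb{C}^n,g_m\cdot\mu)$, the $\operatorname{GL}(\mathbb{C}^n)$-equivariance of $\chi_{\Omega}$ established above gives $\operatorname{Der}_{\Omega}(\mathfrak{A}_m)=\operatorname{Ker}\chi_{\Omega}(g_m\cdot\mu)=g_m\cdot\operatorname{Ker}\chi_{\Omega}(\mu)=g_m\cdot\operatorname{Der}_{\Omega}(\mathfrak{A})$, so $\operatorname{Dim}\operatorname{Der}_{\Omega}(\mathfrak{A}_m)=\operatorname{Dim}\operatorname{Der}_{\Omega}(\mathfrak{A})$ for every $m$. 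The structural point worth stressing is that, by the way the codomain of $\chi_{\Omega}$ was chosen, all the linear maps $\chi_{\Omega}(g_m\cdot\mu)$ and $\chi_{\Omega}(\lambda)$ live on one and the same fixed finite-dimensional vector space $(L^{1}(\mathbb{C}^n;\mathbb{C}^n))^{3}$, so that their nullities may legitimately be compared.

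Next I would invoke continuity of $\chi_{\Omega}$: from $g_m\cdot\mu\to\lambda$ we get $\chi_{\Omega}(g_m\cdot\mu)\to\chi_{\Omega}(\lambda)$ in the (finite-dimensional) space of linear maps from $(L^{1}(\mathbb{C}^n;\mathbb{C}^n))^{3}$ to $(L^{1}(\mathbb{C}^n;\mathbb{C}^n))^{2}\times L^{2}(\mathbb{C}^n;\mathbb{C}^n)$. Then I would apply the elementary fact that the rank function $T\mapsto\operatorname{rank}T$ on such a space is lower semi-continuous: the set $\{T:\operatorname{rank}T\ge r\}$ is the union, over all $r\times r$ submatrices (with respect to fixed bases), of the loci where the corresponding minor is nonzero, hence open; equivalently, the nullity function $T\mapsto\operatorname{Dim}\operatorname{Ker}T$ is upper semi-continuous. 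Applied along the convergent sequence, and using that the nullities $\operatorname{Dim}\operatorname{Ker}\chi_{\Omega}(g_m\cdot\mu)$ are all equal to $\operatorname{Dim}\operatorname{Der}_{\Omega}(\mathfrak{A})$, this yields
$$
\operatorname{Dim}\operatorname{Der}_{\Omega}(\mathfrak{B})=\operatorname{Dim}\operatorname{Ker}\chi_{\Omega}(\lambda)\ge\operatorname{Dim}\operatorname{Ker}\chi_{\Omega}(g_m\cdot\mu)=\operatorname{Dim}\operatorname{Der}_{\Omega}(\mathfrak{A}),
$$
which is exactly the claimed inequality.

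I do not expect a genuine obstacle: the statement is essentially a packaging of the standard semi-continuity of nullity under limits together with the already-proved equivariance of $\chi_{\Omega}$. The only place deserving care is the bookkeeping in the first step, namely making sure one compares kernels of operators on a single ambient $\operatorname{Hom}$-space rather than on spaces that vary with $\mu$, and, if one wishes to be scrupulous, checking that passing from a linear operator to its matrix in fixed bases is continuous so that the minors really are continuous functions of $T$; both are immediate. As a variant avoiding sequences altogether, one may instead observe that $\{\mu'\in L^{2}(\mathbb{C}^n;\mathbb{C}^n):\operatorname{rank}\chi_{\Omega}(\mu')\le\operatorname{rank}\chi_{\Omega}(\mu)\}$ is closed (preimage of a closed set under the continuous $\chi_{\Omega}$) and $\operatorname{GL}(\mathbb{C}^n)$-invariant (rank is unchanged under the conjugation action $g\star(\cdot)$), hence contains the orbit closure of $\mu$ and in particular $\lambda$; taking complements of kernels gives the same conclusion.
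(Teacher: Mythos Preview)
Your argument is correct and follows essentially the same approach as the paper, which merely records in a single sentence that the result follows from the continuity and $\operatorname{GL}(\mathbb{C}^n)$-equivariance of $\chi_{\Omega}$ together with the upper semi-continuity of the nullity function. You have simply unpacked those ingredients carefully (and even supplied a sequence-free variant), so there is nothing to add.
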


\begin{example}
Consider the following one-parameter family of $7$-dimensional nilpotent Lie algebras over $\mathbb{C}$
$$
\mathfrak{g}_{I}(\alpha)
:=
\left\{
\begin{array}{l}
[{\it e_1},{\it e_2}]={\it e_3},[{\it e_1},{\it e_3}]={\it e_4},[{\it e_1},{
\it e_4}]={\it e_5},[{\it e_1},{\it e_5}]={\it e_6},[{\it e_1},{\it e_6}]={
\it e_7},\\
{[{\it e_2},{\it e_3}]}={\it e_5},[{\it e_2},{\it e_4}]={\it e_6},[{
\it e_2},{\it e_5}]= \left( 1-\alpha \right) {\it e_7},[{\it e_3},{\it e_4}
]=\alpha\,{\it e_7}.
\end{array}
\right.
$$
For all $\alpha \neq 0$, the dimensions of some spaces of extended derivations of $\mathfrak{g}_{I}(\alpha)$ are:

\begin{itemize}
\item $\operatorname{Dim} \operatorname{QC}(\mathfrak{g}_{I}( \alpha ))=8$
\item $\operatorname{Dim} \mathcal{D}(0,1,0)(\mathfrak{g}_{I}(\alpha ))=7$, $\operatorname{Dim} \mathcal{D}(1,1,0)(\mathfrak{g}_{I}(\alpha ))=3$ and $\operatorname{Dim} \mathcal{D}(t,1,0)(\mathfrak{g}_{I}(\alpha ))=2$  otherwise.
\item $\operatorname{Dim} \mathcal{T}(0)(\mathfrak{g}_{I}( \alpha ))= 18$,  $\operatorname{Dim} \mathcal{D}(1,1,1)(\mathfrak{g}_{I}(\alpha ))= 10$,
\item $\operatorname{Dim} \mathcal{T}(\frac{1}{2})(\mathfrak{g}_{I}( \alpha ))= 16$, $\operatorname{Dim} \mathcal{D}(0,1,1)(\mathfrak{g}_{I}(\alpha ))= 10$,
\item  $\operatorname{Dim} \mathcal{T}(\beta)(\mathfrak{g}_{I}( \alpha ) )= 17$ with $\beta$ such that  $2\,\alpha\,{\beta}^{2}+ \left( -5\,\alpha+1 \right) \beta+2\,\alpha =0$,
\item $\operatorname{Dim} \mathcal{T}(t)(\mathfrak{g}_{I}(\alpha))= 16$ if $t \neq 0, \beta$.

\end{itemize}

When $\alpha=1$, we have

\begin{itemize}
\item  $\operatorname{Dim} \mathcal{D}(2,1,1)(\mathfrak{g}_{I}(1))=9$,\newline $\operatorname{Dim} \mathcal{D}(-1,1,1)(\mathfrak{g}_{I}(1))=9$, and \newline if $t\neq 0,\pm \frac{1}{2}, 1$, then $\operatorname{Dim} \mathcal{D}(1-2t,1,1)(\mathfrak{g}_{I}(1))=8$.
\end{itemize}

If $\alpha = \frac{1}{10}$, we have

\begin{itemize}
 \item
  $\operatorname{Dim} \mathcal{D}(2,1,1)(\mathfrak{g}_{I}(\frac{1}{10}))= 10$,
   $\operatorname{Dim} \mathcal{D}(5,1,1)(\mathfrak{g}_{I}(\frac{1}{10}))= 9$, and \newline if
   $t\neq 0, \pm\frac{1}{2}, -2$, then $\operatorname{Dim} \mathcal{D}(1-2t,1,1)(\mathfrak{g}_{I}(\frac{1}{10}))= 8$.
\end{itemize}

And if $\alpha \neq 1, \frac{1}{10}, 0$, we have

\begin{itemize}

 \item

  $\operatorname{Dim} \mathcal{D}(2,1,1)(\mathfrak{g}_{I}(\alpha ))= 9$,
  \newline
 $\operatorname{Dim} \mathcal{D}(1-2\beta,1,1)(\mathfrak{g}_{I}(\alpha ))= 9$ with $\beta$ such that $2\,\alpha\,{\beta}^{2}+ \left( -5\,\alpha+1 \right) \beta+2\,\alpha = 0$ and \newline
if $t\neq 0, \pm\frac{1}{2}, \beta$, then $\operatorname{Dim} \mathcal{D}(1-2t,1,1)(\mathfrak{g}_{I}(\alpha ))= 8$.
\end{itemize}

Now consider the $7$-dimensional nilpotent Lie algebra $\mathfrak{g}_{G}=\mathfrak{g}_{I}(0)$ given by
$$
\mathfrak{g}_{G}:=
\left\{
\begin{array}{l}
[{\it e_1},{\it e_2}]={\it e_3},[{\it e_1},{\it e_3}]={\it e_4},[{\it e_1},{
\it e_4}]={\it e_5},[{\it e_1},{\it e_5}]={\it e_6},
[{\it e_1},{\it e_6}]={\it e_7}, \\
{[{\it e_2},{\it e_3}]}={\it e_5},[{\it e_2},{\it e_4}]={\it e_6},[{
\it e_2},{\it e_5}]={\it e_7}.
\end{array}
\right.
$$
We obtain
\begin{itemize}
\item $\operatorname{Dim} \operatorname{QC}(\mathfrak{g}_{G})= 8$
\item $\operatorname{Dim} \mathcal{D}(0,1,0)( \mathfrak{g}_{G} )=7$, $\operatorname{Dim} \mathcal{D}(1,1,0)( \mathfrak{g}_{G} )=3$ and $\operatorname{Dim} \mathcal{D}(t,1,0)( \mathfrak{g}_{G} )=2$  otherwise.
\item $\operatorname{Dim} \mathcal{T}(0)( \mathfrak{g}_{G} )= 19$ and $\operatorname{Dim} \mathcal{T}(t)( \mathfrak{g}_{G} )= 16$ otherwise.
\item $\operatorname{Dim} \mathcal{D}(1,1,1)(  \mathfrak{g}_{G} )= 11$,
      $\operatorname{Dim} \mathcal{D}(0,1,1)(  \mathfrak{g}_{G} )= 11$,
      $\operatorname{Dim} \mathcal{D}(2,1,1)(  \mathfrak{g}_{G} )= 9$ and \newline
      $\operatorname{Dim} \mathcal{D}(1-2t,1,1)(  \mathfrak{g}_{G} )= 8$ otherwise.
\end{itemize}

It is shown in \cite[p. 1270]{Burde} that $\mathfrak{g}_{I}(\alpha)$ with $\alpha \neq 0$ cannot degenerate to $\mathfrak{g}_{G}$. In the special case $\alpha=1$, $\mathfrak{g}_{I}(1)$ admits a $1$-codimension ideal which is $2$-step nilpotent while $\mathfrak{g}_{G}$ do not admit such an ideal. When $\alpha \neq 0,1$, the proof of $\mathfrak{g}_{I}(\alpha)$ does not degenerate to $\mathfrak{g}_{G}$ follows from a careful analysis of a Borel-orbit of $\mathfrak{g}_{I}(\alpha)$ (see {\cite[Proposition 1.7.]{GrunewaldOHalloran}}).

We can give an alternative proof of the preceding fact by using the above invariants. Suppose on the contrary that for any $\alpha \neq0$, $\mathfrak{g}_{I}(\alpha)$ degenerates to $\mathfrak{g}_{G}$ and let $\beta$ be such that $2\,\alpha\,{\beta}^{2}+ \left( -5\,\alpha+1 \right) \beta+2\,\alpha =0$. It is important to note that $\beta \neq 0$.
We conclude from Proposition \ref{obstruction} that $17 = \operatorname{Dim} \mathcal{T}(\beta)(\mathfrak{g}_{I}( \alpha ) ) \leq \operatorname{Dim} \mathcal{T}(\beta)( \mathfrak{g}_{G} )$. But, $\beta \neq 0$, and so $\operatorname{Dim} \mathcal{T}(\beta)( \mathfrak{g}_{G} ) = 16$; which is a contradiction. Note that, we can also use $(s,1,1)$-derivations to give another proof.
\end{example}

%{
%\clearpage

%}

\end{document}